
\documentclass[a4paper]{article}
\usepackage[latin1]{inputenc}
\usepackage{amsmath}
\usepackage{amsfonts}
\usepackage{amssymb}
\usepackage{graphicx}
\usepackage{amsthm} 	
\usepackage{hyperref}
\usepackage{todonotes}
\usetikzlibrary{backgrounds}	
\usepackage[toc]{appendix}

\usepackage{graphicx,xcolor}	

\usepackage{soul}

\usepackage{xcolor}
\pagecolor{white}

\usepackage{esint}				

\usepackage[a4paper,top=2.1cm,bottom=2.10cm,left=2.6cm,right=2.6cm]{geometry} 




\allowdisplaybreaks

\newtheorem{theorem}{Theorem}[section]
\newtheorem{proposition}[theorem]{Proposition}

\newtheorem{definition}[theorem]{Definition}
\newtheorem{remark}[theorem]{Remark}
\newtheorem{lemma}[theorem]{Lemma}

\title{Interaction of scales for a singularly perturbed degenerating nonlinear Robin problem}
\author{ Paolo Musolino \thanks{Dipartimento di Scienze Molecolari e Nanosistemi, Universit\`a Ca' Foscari Venezia, via Torino 155, 30172 Venezia Mestre, Italy. E-mail: paolo.musolino@unive.it} and   Gennady Mishuris \thanks{Department of Mathematics, Aberystwyth University, Aberystwyth, SY23 3BZ Wales, UK. E-mail: ggm@aber.ac.uk}}

\date{\ }

\begin{document}

\maketitle

\noindent

{\bf Abstract:}   We study the asymptotic behavior of the solutions of a boundary value problem for the Laplace equation in a perforated domain in $\mathbb{R}^n$, $n\geq 3$, with a (nonlinear) Robin boundary condition on the boundary of the small hole. The problem we wish to consider degenerates under three aspects: in the limit case the Robin boundary condition may degenerate into a Neumann boundary condition,  the Robin datum may tend to infinity, and the size $\epsilon$ of the small hole where we consider the Robin condition collapses to $0$. We study how these three singularities interact and affect the asymptotic behavior as $\epsilon$ tends to $0$, and we represent the solution and its energy integral in terms of real analytic maps and known functions of the singular perturbation parameters.

\vspace{9pt}

\noindent
{\bf Keywords:}  singularly perturbed boundary value problem,  Laplace equation, nonlinear Robin condition, perforated domain, integral equations\vspace{9pt}

\noindent   
{{\bf 2020 Mathematics Subject Classification:}}  35J25; 31B10; 35B25; 35C20; 47H30.

\section{Introduction}
\label{introd}

This paper is devoted to the study of the asymptotic behavior of the solutions of a boundary value problem for the Laplace equation in a perforated domain in $\mathbb{R}^n$, $n\geq 3$,  with a (nonlinear) Robin boundary condition which degenerates into a Neumann condition on the boundary of the small hole. The problem we wish to consider degenerates under three aspects.   First, in the limit case the Robin boundary condition may degenerate into a Neumann boundary condition ({\it i.e.}, the coefficient of the trace of the solution in the boundary condition may vanish). Then, the Robin datum may tend to infinity. Finally, the size $\epsilon$ of the small hole where we consider the Robin condition approaches the degenerate value $\epsilon=0$.

The behavior of the solutions to boundary value problems with degenerating or perturbed boundary conditions have been studied by many authors. A family of Poincar\'e problems approximating a mixed boundary value problem for the Laplace equation in the plane has been studied in Wendland, Stephan, and Hsiao \cite{WeStHs79}. A study of the convergence of the solution of the Helmholtz equation with boundary condition of the type $-\epsilon \frac{\partial u}{\partial \nu}+u=g$ to the solution with Dirichlet condition $u=g$ as $\epsilon \to 0$ can be found in  Kirsch \cite{Ki85}.  Costabel and Dauge \cite{CoDa96} studied a mixed Neumann-Robin problem for the Laplace operator, where the Robin condition tends to a Dirichlet condition as the perturbation parameter tends to $0$. Boundary value problems for Maxwell equations with singularly perturbed boundary conditions have been analyzed for example in Ammari and N\'ed\'elec \cite{AmNe99}.  Also, singularly perturbed transmission problems have been investigated by Schmidt and Hiptmair \cite{ScHi17} by means of integral equation methods. Dalla Riva and Mishuris \cite{DaMi15} have investigated the solvability of  a small nonlinear perturbation of a homogeneous linear transmission problem by potential theoretical techniques. The present paper represents a continuation of the analysis done in \cite{MuMi18}, where the authors of the present paper have considered the behavior as $\delta \to 0$ of the solutions to the  boundary value problem
\begin{equation}\label{bvp:MuMi18}
\left\{
\begin{array}{ll}
\Delta u(x)=0 & \forall x \in \Omega^o \setminus \overline{\Omega^i}\,,\\
\frac{\partial}{\partial \nu_{\Omega^o}}u(x)=g^o(x) & \forall x \in \partial \Omega^o\, ,\\
\frac{\partial}{\partial \nu_{\Omega^i}}u(x)=\delta F_\delta (u(x))+g^i(x) & \forall x \in \partial \Omega^i\, ,
\end{array}
\right.
\end{equation}
where $\Omega^o$ and $\Omega^i$ are sufficiently regular bounded open sets such that $\overline{\Omega^i}\subseteq \Omega^o$. Here above, the superscript  ``$o$'' stands for ``outer domain'' and the superscript ``$i$'' stands for ``inner domain.''  The problem above generalizes a linear  problem that, under suitable assumptions, admits a unique solution $u_\delta$ for each $\delta >0$. When $\delta=0$ the problem above degenerates into the Neumann problem
\begin{equation}\label{bvp0:MuMi18}
\left\{
\begin{array}{ll}
\Delta u(x)=0 & \forall x \in \Omega^o \setminus \overline{\Omega^i}\,,\\
\frac{\partial}{\partial \nu_{\Omega^o}}u(x)=g^o(x) & \forall x \in \partial \Omega^o\, ,\\
\frac{\partial}{\partial \nu_{\Omega^i}}u(x)=g^i(x) & \forall x \in \partial \Omega^i\, .
\end{array}
\right.
\end{equation}
As is well known, the Neumann problem above may have infinite solutions or no solutions, depending on compatibility conditions on the Neumann datum. In \cite{MuMi18}, we have proved that, under suitable assumptions, solutions to  \eqref{bvp:MuMi18} exist and we have shown that they diverge if the compatibility condition  on the Neumann datum for the existence of solutions to \eqref{bvp0:MuMi18} does not hold.   In \cite{MuMi18}, we have considered a Robin problem as simplified model for the transmission problem for a composite domain with imperfect (nonnatural) conditions along the joint boundary. Such nonlinear transmission conditions frequently appear in practical applications for various nonlinear multiphysics problems ({\it e.g.}, \cite{Bordi15,Mi04,MiMiOc07,MiMiOc08,MiMiOc09,Mish05,Mish07,Sonato15}). All such transmissions conditions have been derived using a formal variation or asymptotic techniques (see, for example, \cite{MovMov95,BenMil01,Ben06}). However, accurate analysis on their solvability and solution regularity has not been provided. One of the aim of this paper is to address this need. On the other hand, the problem in question, even being simpler then most of those appearing in applications, is rich enough as it contains a few features influencing on the final result. It refers not only the condition itself but also the surface, where they hold true.

In \cite{MuMi18}, we have considered the case where the surface where we consider the Robin condition is the boundary of a fixed hole $\Omega^i$. Here we wish to study the case where the hole becomes small and degenerates into a point. Then a natural question arises: if we replace the set $\Omega^i$ by a small set $\epsilon\omega^i$ (with $\epsilon$ close to $0$) and the parameter $\delta$ by a function $\delta(\epsilon)$ possibly tending to zero as $\epsilon \to 0$, what does it happen? How does the geometric degeneracy (the set $\epsilon\omega^i$ collapses to the origin when $\epsilon=0$) interact with the possible degeneracy of the boundary condition if $\delta(\epsilon)\to 0$ when $\epsilon$ is close to $0$? We also observe that if one hand several techniques are available for the analysis of linear problems, the presence of a nonlinear boundary condition requires a specific analysis since, for example, existence and uniqueness of solutions is not immediately ensured.

The purpose of the present paper is to give an answer to these questions. In the present paper we consider only the case of dimension $n\geq 3$. Indeed our technique is based on potential theory and the two-dimensional case requires a specific analysis due to different aspect of the fundamental solution of the Laplacian. In particular, if $n=2$ the fundamental solution $S_n$ of the Laplacian equals $\frac{1}{2\pi}\log |x|$ whereas if $n\geq 3$ the fundamental solution $S_n$ is a multiple of $\frac{1}{|x|^{n-2}}$. This leads to different rescaling behavior of $S_n(\epsilon x)$ and to different behavior at infinity of single layer potentials, which are among our main tools in the analysis. We note that the set $\epsilon\omega^i$  when $\epsilon$ is close to zero, can be seen as a small hole in the set $\Omega^o$. The behavior of the solutions to boundary value problems in domains with small holes has been long investigated by the expansion methods of Asymptotic Analysis. Such methods are mainly based on elliptic theory and allow the treatment of a large variety of linear problems. As an example, we mention the method of matching outer and inner asymptotic expansions of Il'in \cite{Il92} and the compound asymptotic expansion method of Maz'ya, Nazarov, and Plamenevskij \cite{MaNaPl00i, MaNaPl00ii}, which allows the treatment of general Douglis-Nirenberg elliptic boundary value problems in domains with perforations and corners. More recently, Maz'ya, Movchan, and Nieves \cite{MaMoNi13} provided the asymptotic analysis of Green's kernels in domains with small cavities by applying the method of mesoscale asymptotic approximations (see also the papers \cite{MaMoNi14, MaMoNi16, MaMoNi17, MaMoNi21, Ni17}). Moreover, for several applications to inverse problems we refer to Ammari and Kang \cite{AmKa07}, and for applications to topological optimization to Novotny and Soko\l owski \cite{NoSo13}.

Instead of the methods of Asymptotic Analysis, here we exploit the so-called Functional Analytic Approach proposed by Lanza de Cristoforis in \cite{La02}. The goal of such approach is to represent the solutions to problems in perturbed domains in terms of real analytic maps and known functions of the perturbation parameter. For a detailed presentation of the Functional Analytic Approach we refer to   Dalla Riva, Lanza de Cristoforis, and Musolino  \cite{DaLaMu21}. Here we mention that the Functional Analytic Approach has been used to analyze a nonlinear Robin problem for the Laplace equation  in Lanza de Cristoforis \cite{La07} and in Lanza de Cristoforis and Musolino \cite{LaMu13bis} and nonlinear traction problems for the Lam\'e equations for example in Dalla Riva and Lanza de Cristoforis \cite{DaLa10, DaLa10bis, DaLa11} and in Falconi, Luzzini, and Musolino \cite{FaLuMu21}.

As a first step, we introduce the geometric setting where we are going to consider our boundary value problem. As dimensional parameter, we take a natural number
\[
n\in {\mathbb{N}}\setminus\{0,1, 2 \}\, .
\]
Then, to define the perforated domain, we consider a regularity parameter $\alpha\in]0,1[$ and two subsets $\omega^i$,   $\Omega^o$ of ${\mathbb{R}}^{n}$ satisfying the following condition:
\[
\begin{split}
&\text{$\omega^i$ and $\Omega^o$ are bounded open connected subsets of ${\mathbb{R}}^{n}$ of class $C^{1,\alpha}$}\\
&\text{such that $0 \in  \Omega^o \cap \omega^i$ and that ${\mathbb{R}}^{n}\setminus\overline{\omega^i}$ and ${\mathbb{R}}^{n}\setminus\overline{\Omega^o}$ are connected}.
\end{split}
\]
The set $\Omega^o$ plays the role of the unperturbed domain, whereas the set $\omega^i$ represents the shape of the perforation. We refer, \textit{e.g.}, to Gilbarg and Trudinger~\cite{GiTr83} for the definition of sets and functions of the Schauder class $C^{k,\alpha}$ ($k \in \mathbb{N}$). 
We fix
\[
\epsilon_0 \equiv \mbox{sup}\{\theta \in \mathopen]0, +\infty\mathclose[: \epsilon \overline{\omega^i} \subseteq \Omega^o, \ \forall \epsilon \in \mathopen]- \theta, \theta[  \}\, .
\]
We note that if $\epsilon \in ]0,\epsilon_0[$ the set  $\epsilon \overline{\omega^i}$ (which we think as a hole) is contained in $\Omega^o$ and therefore we can remove it from the unperturbed domain. We define the perforated domain $\Omega(\epsilon)$ by setting
\[
\Omega(\epsilon) \equiv \Omega^o \setminus \epsilon \overline{\omega^i}
\]
for all $\epsilon\in\mathopen]0,\epsilon_0[$.  When $\epsilon$ approaches  zero, the set $\Omega(\epsilon)$
degenerates to the punctured domain $\Omega^o \setminus \{0\}$. Clearly, the boundary $\partial \Omega(\epsilon)$ of $\Omega(\epsilon)$ consists of the two connected components $\partial \Omega^o$ and $\partial (\epsilon \omega^i) =\epsilon \partial {\omega^i}$. Therefore, we can identify, for example, $C^{0,\alpha}(\partial \Omega(\epsilon))$ with the product $C^{0,\alpha}(\partial \Omega^o)\times C^{0,\alpha}(\epsilon \partial \omega^i)$. Moreover, after a suitable rescaling, we can identify functions in $C^{0,\alpha}(\epsilon \partial \omega^i)$ with functions in $C^{0,\alpha}( \partial \omega^i)$. Once we have introduced the geometric aspects of our problem, we need to define the boundary data. As a consequence, we fix two functions
\[
g^o\in C^{0,\alpha}(\partial \Omega^o)\, , \qquad g^i\in C^{0,\alpha}(\partial \omega^i)\, .
\]
Then we take a family $\{F_\epsilon\}_{\epsilon \in ]0,\epsilon_0[}$ of functions from $\mathbb{R}$ to $\mathbb{R}$, and two functions $\delta(\cdot)$ and $\rho(\cdot)$  from $]0,\epsilon_0[$ to $]0,+\infty[$. As we shall see, the function $g^o$ represents the Neumann datum on the exterior boundary $\partial \Omega^o$. The family of functions $\{F_\epsilon\}_{\epsilon \in ]0,\epsilon_0[}$ will allow to define the nonlinear Robin condition on $\epsilon\partial \omega^i$ and $\delta(\epsilon)$ will be  the coefficient of a function of the  Dirichlet trace in the Robin condition. We will consider a nonhomogeneous Robin condition, and thus the corresponding datum will be  $\frac{g^i(\cdot/\epsilon)}{\rho(\epsilon)}$. Next, for each $\epsilon \in ]0,\epsilon_0[$ we want to consider a nonlinear boundary value problem for the Laplace operator. Namely, we consider a Neumann condition on $\partial \Omega^o$ and a nonlinear Robin condition on $\epsilon \partial \omega^i$. Thus, for each $\epsilon \in ]0,\epsilon_0[$ we consider the following problem:
\begin{equation}
\label{bvpdelta}
\left\{
\begin{array}{ll}
\Delta u(x)=0 & \forall x \in \Omega(\epsilon)\,,\\
\frac{\partial}{\partial \nu_{\Omega^o}}u(x)=g^o(x) & \forall x \in \partial \Omega^o\, ,\\
\frac{\partial}{\partial \nu_{\epsilon\omega^i}}u(x)=\delta(\epsilon) F_\epsilon (u(x))+\frac{g^i(x/\epsilon)}{\rho(\epsilon)} & \forall x \in \epsilon \partial \omega^i\, ,
\end{array}
\right.
\end{equation}
where $\nu_{\Omega^o}$ and $\nu_{\epsilon\omega^i}$ denote the outward unit normal to $\partial \Omega^o$ and to $\partial (\epsilon \omega^i)$, respectively. Our aim is to analyze the behavior of the solutions to problem \eqref{bvpdelta} as $\epsilon \to 0$. As we have already mentioned, when $\epsilon$ tends to $0$ the hole $\epsilon \omega^i$ degenerates into the origin $0$. Moreover, if $\delta(\epsilon)$ tends to 0 as $\epsilon \to 0$, the Robin condition may degenerate into a Neumann condition. Furthermore, we will also allow the term $\rho(\epsilon)$ to tend to 0, which may originate a further singularity. An aspect we wish to highlight in the present paper is how all these singularities interact together. Our main results are represented by Theorems \ref{thm:repn}, \ref{thm:repnmicro}, where we describe in details the asymptotic behavior of the solutions as $\epsilon \to 0$ and Theorem \ref{thm:enrepn}, where we consider the behavior of the energy integrals of the solutions. These results highlight the interactions of different scales. Moreveor, as we will see, it will be crucial to assume that the quantities
$
\epsilon \delta(\epsilon)\, , \frac{\epsilon^{n-1}}{\rho(\epsilon)}\,
$
have limit as $\epsilon \to 0$. Incidentally, we observe that interactions of scales are well known to possibly cause strange phenomena in the limiting behavior of solutions. As an example, we mention the celebrated works of Cioranescu and Murat \cite{CiMu82i, CiMu82ii} and of Mar{\v{c}}enko  and Khruslov \cite{MaKh74} and the more recent papers of Arrieta and Lamberti \cite{ArLa17}, Arrieta, Ferraresso, and Lamberti \cite{ArFeLa18}, and Ferraresso and Lamberti \cite{FeLa19}. We also mention  Bonnetier, Dapogny, and   Vogelius \cite{BoDaVo21}  concerning   small perturbations in the type of boundary conditions  and Felli, Noris, and Ognibene \cite{FeNoOg21,FeNoOg22} on  disappearing Neumann or Dirichlet regions in mixed eigenvalue problems.

We observe that in the present paper the boundary of the hole depends on $\epsilon$ simply through a dilation. However, in literature one can find examples where the geometry changes in a more drastic way, as, for example, the case of oscillating boundaries (see, {\it e.g.}, \cite{ArLa17, ArFeLa18, FeLa19}). On the other hand, one may also consider the case where the geometry is fixed and the boundary condition is changing as in \cite{MuMi18}.

The paper is organized as follows. In Section \ref{model} we analyze a toy problem in an annular domain. In Section \ref{inteqfor} we transform problem \eqref{bvpdelta} into an equivalent system of integral equations. In Section \ref{rep}, we analyze such system and we prove our main results on the asymptotic behavior of a family of solutions and the corresponding energy integrals. 

\section{A toy problem}\label{model}

As we have done in \cite{MuMi18}, we consider problem \eqref{bvpdelta} in the annular domain
\[
\Omega(\epsilon )\equiv \mathbb{B}_n(0,1)\setminus \overline{\mathbb{B}_n(0,\epsilon)}=\mathbb{B}_n(0,1)\setminus \epsilon \overline{\mathbb{B}_n(0,1)}\, ,
\]
{\it i.e.}, we take $\Omega^o\equiv \mathbb{B}_n(0,1)$ and $\omega^i\equiv \mathbb{B}_n(0,1)$, where, for $r>0$, the symbol $\mathbb{B}_n(0,r)$ denotes the open ball in $\mathbb{R}^n$ of center $0$ and radius $r$.  We will then set $\epsilon_0=1$, $F_\epsilon(\tau)=\tau$ for all $\tau \in \mathbb{R}$ and for all $\epsilon \in ]0,\epsilon_0[$, $g^o=a$, and $g^i=b$, where $a,b \in \mathbb{R}$. Moreover, we consider two functions $\delta, \rho \colon ]0,1[\mapsto ]0,+\infty[$. Then for each $\epsilon \in ]0,1[$ we consider the problem
\begin{equation}
\label{bvpmodeldelta}
\left\{
\begin{array}{ll}
\Delta u(x)=0 & \forall x \in \mathbb{B}_n(0,1)\setminus \overline{\mathbb{B}_n(0,\epsilon)}\,,\\
\frac{\partial}{\partial \nu_{\mathbb{B}_n(0,1)}}u(x)=a & \forall x \in \partial \mathbb{B}_n(0,1)\, ,\\
\frac{\partial}{\partial \nu_{\mathbb{B}_n(0,\epsilon)}}u(x)=\delta(\epsilon) u(x)+\frac{b}{\rho(\epsilon)} & \forall x \in \partial \mathbb{B}_n(0,\epsilon)\, .
\end{array}
\right.
\end{equation}
As is well known, for each $\epsilon \in ]0,1[$   a solution $u_\epsilon \in C^{1,\alpha}(\overline{\Omega(\epsilon)})$ to problem \eqref{bvpmodeldelta} exists and is unique (see Dalla Riva, Lanza de Cristoforis, and Musolino  \cite[Thm.~6.56]{DaLaMu21}). On the other hand, if instead we put $\epsilon=0$ in \eqref{bvpmodeldelta} the hole disappears and we are led to consider the Neumann problem
\begin{equation}
\label{bvpmodeldelta0}
\left\{
\begin{array}{ll}
\Delta u(x)=0 & \forall x \in \mathbb{B}_n(0,1)\,,\\
\frac{\partial}{\partial \nu_{\mathbb{B}_n(0,1)}}u(x)=a & \forall x \in \partial \mathbb{B}_n(0,1)\, .\\
\end{array}
\right.
\end{equation}
As it happens to any Neumann problem, the solvability of \eqref{bvpmodeldelta0} is subject to compatibility conditions on the Neumann datum on $\partial \mathbb{B}_n(0,1)$. In this specific case of constant Neumann datum, problem \eqref{bvpmodeldelta0} has a solution if and only if $a =0$. Obviously, if $a= 0$ then the Neumann problem \eqref{bvpmodeldelta0} has a one-dimensional space of solutions, which consists of the space of constant functions in $\overline{\mathbb{B}_{n}(0,1)}$; if instead $a\neq 0$, problem \eqref{bvpmodeldelta0} does not have any solution. On the other hand, if one considers the behavior of  the unique solution $u_\epsilon$ of problem \eqref{bvpmodeldelta},  the remark above clearly implies that in general $u_\epsilon$  cannot converge to a solution of \eqref{bvpmodeldelta0} as $\epsilon \to 0$, if the compatibility condition $a=0$ does not hold. Also, even if $a=0$, we shall see that the solutions may diverge as $\epsilon \to 0$, depending on the behavior of the functions $\delta(\epsilon)$ and $\rho(\epsilon)$ for $\epsilon$ close to $0$. Moreover, one would like to understand how the behavior of $\delta(\epsilon)$ and $\rho(\epsilon)$ affects the asymptotic behavior of $u_\epsilon$ and if there is  a ``memory'' of the Robin condition. In the specific case of our annular domain and constant data, we can construct explicitly the solution $u_\epsilon$. Then we try to understand the behavior of $u_\epsilon$ as $\epsilon \to 0$. To construct explicitly $u_\epsilon$, we search for a solution of \eqref{bvpmodeldelta} in the form
\[
u_\epsilon(x)\equiv A_\epsilon \frac{1}{(2-n)|x|^{n-2}} + B_\epsilon \qquad \forall x \in \overline{\Omega(\epsilon)}\, ,
\]
with $A_\epsilon$ and $B_\epsilon$ to be set so that the boundary conditions of problem \eqref{bvpmodeldelta} are satisfied. By a  straightforward computation, we must have
\begin{equation}\label{eq:sol:n}
u_\epsilon(x)\equiv a \frac{1}{(2-n)|x|^{n-2}} +\frac{1}{\delta(\epsilon)}\bigg(\frac{a}{\epsilon^{n-1}}-\frac{b}{\rho(\epsilon)}\bigg)+ \frac{a}{(n-2)\epsilon^{n-2}}\qquad \forall x \in \overline{\Omega(\epsilon)}\, .
\end{equation}
We now note that we can rewrite equation \eqref{eq:sol:n} as
\begin{equation}\label{eq:sol:na}
u_\epsilon(x)\equiv a \frac{1}{(2-n)|x|^{n-2}} +\frac{1}{\delta(\epsilon)\epsilon^{n-1}}\bigg(a-b\frac{\epsilon^{n-1}}{\rho(\epsilon)}+ \frac{a}{(n-2)}\delta(\epsilon)\epsilon\bigg)\qquad \forall x \in \overline{\Omega(\epsilon)}\, .
\end{equation}
In particular, if $d_0\equiv \lim_{\epsilon\to 0}\epsilon\delta(\epsilon) \in \mathbb{R}$, $r_0\equiv \lim_{\epsilon\to 0}\frac{\epsilon^{n-1}}{\rho(\epsilon)} \in \mathbb{R}$, and
$
a-b r_0+\frac{a}{n-2}  d_0\neq 0\, ,
$
then
$u_\epsilon(x)$ is asymptotic to $(a-b r_0+\frac{a}{n-2}  d_0)/(\epsilon^{n-1} \delta(\epsilon))$ as $\epsilon$ tends to $0$, when $x$ is fixed in $\overline{\mathbb{B}_n(0,1)}\setminus \{0\}$. In conclusion, under suitable assumptions on the behavior of $\delta(\epsilon)$ and $\rho(\epsilon)$ as $\epsilon \to 0$, we see that the value of the solution $u_\epsilon$ at a fixed point $x\in \overline{\mathbb{B}_n(0,1)}\setminus \{0\}$ behaves like $1/(\epsilon^{n-1} \delta(\epsilon))$ and that there is some sort of interaction of scales influencing the limiting behavior of the solution. Similarly, if one considers the energy integral of $u_\epsilon$, a direct computation shows that
\begin{align}
\int_{\Omega(\epsilon)}|\nabla u_\epsilon(x)|^2\, dx&=\int_{\Omega(\epsilon)}|\nabla \Big(a \frac{1}{(2-n)|x|^{n-2}}\Big)|^2\, dx=\int_{\Omega(\epsilon)} a^2 \frac{1}{|x|^{2n-2}}\, dx \nonumber\\
&\qquad =a^2 s_n \int_\epsilon^1  \frac{1}{r^{n-1}}\, dr=a^2 \frac{s_n}{(n-2)}\frac{1}{\epsilon^{n-2}}\Big(1-\epsilon^{n-2}\Big)\, , \label{eq:solen:na}
\end{align}
where the symbol $s_n$ denotes the $(n-1)$-dimensional measure of $\partial \mathbb{B}_n(0,1)$. In particular, if $a\neq 0$, the energy integral of the solution $\int_{\Omega(\epsilon)}|\nabla u_\epsilon(x)|^2\, dx$ behaves like $1/\epsilon^{n-2}$.  

Our aim  is to recover and understand such behavior of the solution and of its energy integral in a more general situation, both for the geometry and the boundary conditions. Indeed, we will show that the main features discussed above can be identified in the general solution (compare \eqref{eq:sol:na} with \eqref{eq:repn:a}, and \eqref{eq:solen:na} with \eqref{eq:repn:3}).  We emphasize that one can derive uniform asymptotic solution by the methods of \cite{MaMoNi21, MaNaPl00i, MaNaPl00ii}. In particular, one can identify the uniform limit far from the hole as first approximation. Then one can correct such limit in order to improve the approximation on rescaled sets, and then repeat the procedure in order to reduce the error. We will show in Remarks \ref{rem:lin1} and \ref{rem:lin2} how one can deduce the above considerations from the results of Section \ref{rep} (which can thus be seen as  analog formulas in more general settings).

\section{An integral equation formulation of the boundary value problem}\label{inteqfor}

As in \cite{MuMi18}, in order to analyze problem \eqref{bvpdelta} for $\epsilon$ close to $0$, we exploit the so-called Functional Analytic Approach (see Dalla Riva, Lanza de Cristoforis, and Musolino  \cite{DaLaMu21}). Such method is based on classical potential theory, which  allows to obtain an integral equation formulation of \eqref{bvpdelta}. As a consequence, we need to introduce some notation. We denote by $S_{n}$ be the function from ${\mathbb{R}}^{n}\setminus\{0\}$ to ${\mathbb{R}}$ defined by
\[
S_{n}(x)\equiv
\frac{1}{(2-n)s_{n}|x|^{n-2}}\qquad    \forall x\in
{\mathbb{R}}^{n}\setminus\{0\}\,.
\]
Since $n \geq 3$, as is well-known, $S_{n}$ is  a
fundamental solution of the Laplace operator. By means of the fundamental solution $S_n$, we construct some integral operators (namely single layer potentials) that we use to represent harmonic functions (and thus, in particular, the solutions of problem \eqref{bvpdelta}). So let $\Omega$ be a bounded open connected subset of ${\mathbb{R}}^{n}$ of class $C^{1,\alpha}$. If $\mu\in C^{0}(\partial\Omega)$, we  introduce the single layer potential by setting
\[
v[\partial\Omega,\mu](x)\equiv
\int_{\partial\Omega}S_{n}(x-y)\mu(y)\,d\sigma_{y}
\qquad\forall x\in {\mathbb{R}}^{n}\,,
\]
 where  $d\sigma$ denotes the area element of a manifold imbedded in ${\mathbb{R}}^{n}$. It is well-known that if $\mu\in C^{0}(\partial{\Omega})$, then $v[\partial\Omega,\mu]$ is continuous in  ${\mathbb{R}}^{n}$. Moreover, if $\mu\in C^{0,\alpha}(\partial\Omega)$, then the function
$v^{+}[\partial\Omega,\mu]\equiv v[\partial\Omega,\mu]_{|\overline{\Omega}}$ belongs to $C^{1,\alpha}(\overline{\Omega})$, and the function
$v^{-}[\partial\Omega,\mu]\equiv v[\partial\Omega,\mu]_{|\mathbb{R}^n \setminus \Omega}$ belongs to $C^{1,\alpha}_{\mathrm{loc}}
(\mathbb{R}^n \setminus \Omega)$.  The normal derivative of the single layer potential on  $\partial \Omega$, instead, presents a jump. To describe such jump,  we set
\[
W^{\ast}[\partial\Omega,\mu](x)\equiv
\int_{\partial\Omega}\nu_{\Omega}(x) \cdot \nabla S_{n}(x-y)\mu(y)\,d\sigma_{y}
\qquad\forall x\in \partial \Omega\,,
\]
where $\nu_{\Omega}$  denotes the outward unit normal to $\partial \Omega$. If $\mu\in C^{0,\alpha}(\partial{\Omega})$, the function
$W^{\ast}[\partial\Omega,\mu]$ belongs to $C^{0,\alpha}(\partial \Omega)$ and we have
\[
\frac{\partial }{\partial \nu_{\Omega}}v^\pm[\partial \Omega,\mu]=\mp \frac{1}{2}\mu + W^{\ast}[\partial \Omega,\mu]\qquad \text{on $\partial \Omega$\, .}
\]

As we shall see in Lemma \ref{lem:rep}, in order to represent the functions on $\overline{\Omega(\epsilon)}$ which are harmonic and satisfy the boundary conditions, we will exploit single layer potentials with densities with zero integral mean on $\partial \Omega^o$ plus constants. Therefore, we find it convenient to set
\[
C^{0,\alpha}(\partial \Omega^o)_{0}\equiv
\bigg\{
f\in C^{0,\alpha}(\partial \Omega^o):\,\int_{\partial\Omega^o}f\,d\sigma=0
\bigg\}\,.
\]

More precisely, in Lemma \ref{lem:rep} below, we represent a function $u\in C^{1,\alpha}(\overline{\Omega(\epsilon)})$ such that $\Delta u=0$ in $\Omega(\epsilon)$ as a single layer potential and the $\epsilon$-dependent constant $\frac{\xi}{\delta(\epsilon)\epsilon^{n-1}}$. The reason for the choice of such constant is that in view of the results of Section \ref{model}, we expect the presence of a constant behaving like $\frac{1}{\delta(\epsilon)\epsilon^{n-1}}$ as $\epsilon \to 0$ in the representation formula of the solutions of  \eqref{bvpdelta}. The proof of  Lemma \ref{lem:rep}  can be deduced by classical potential theory (cf.~Folland \cite[Ch.~3]{Fo95} and Dalla Riva, Lanza de Cristoforis, and Musolino  \cite[proof of Prop.~6.49]{DaLaMu21}).

\begin{lemma}\label{lem:rep}
Let $\epsilon \in ]0,\epsilon_0[$. Let $u\in C^{1,\alpha}(\overline{\Omega(\epsilon)})$ be such that $\Delta u=0$ in $\Omega(\epsilon)$. Then there exists a unique triple $(\mu^o,\mu^i,\xi)\in  C^{0,\alpha}(\partial\Omega^o)_{0}\times C^{0,\alpha}(\partial\omega^i)\times {\mathbb{R}}$ such that
\[
u(x)=\int_{\partial \Omega^o}S_n(x-y)\mu^o(y)\, d\sigma_y+ \int_{\partial \omega^i}S_n(x-\epsilon s)\mu^i(s)\, d\sigma_s+\frac{\xi}{\delta(\epsilon)\epsilon^{n-1}}\qquad \forall x \in \overline{\Omega(\epsilon)}\, .
\]
\end{lemma}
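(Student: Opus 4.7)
The plan is to show that the linear map
\begin{equation*}
T(\mu^o,\mu^i,\xi)(x) := \int_{\partial\Omega^o}S_n(x-y)\mu^o(y)\,d\sigma_y + \int_{\partial\omega^i}S_n(x-\epsilon s)\mu^i(s)\,d\sigma_s + \frac{\xi}{\delta(\epsilon)\epsilon^{n-1}}
\end{equation*}
is a bijection from $C^{0,\alpha}(\partial\Omega^o)_{0}\times C^{0,\alpha}(\partial\omega^i)\times\mathbb{R}$ onto the space of harmonic $C^{1,\alpha}(\overline{\Omega(\epsilon)})$-functions; this simultaneously delivers existence and uniqueness of the triple $(\mu^o,\mu^i,\xi)$.

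For injectivity, I would assume $T(\mu^o,\mu^i,\xi)\equiv 0$ on $\overline{\Omega(\epsilon)}$ and let $U$ denote the same formula viewed on all of $\mathbb{R}^n$. Then $U$ is continuous on $\mathbb{R}^n$, harmonic off $\partial\Omega^o\cup\epsilon\partial\omega^i$, and $U(x)\to c:=\xi/(\delta(\epsilon)\epsilon^{n-1})$ as $|x|\to\infty$ thanks to the $n\geq 3$ decay of $S_n$. Continuity plus the maximum principle inside $\epsilon\omega^i$ yield $U\equiv 0$ on $\overline{\Omega^o}$. On $\mathbb{R}^n\setminus\overline{\Omega^o}$, $U$ is harmonic, vanishes on $\partial\Omega^o$, and tends to $c$ at infinity; the zero-mean restriction $\int_{\partial\Omega^o}\mu^o\,d\sigma=0$ upgrades the decay of $v[\partial\Omega^o,\mu^o]$ to $O(|x|^{-(n-1)})$, so $\nabla U\in L^2(\mathbb{R}^n\setminus\overline{\Omega^o})$. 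Combining Green's identity with the jump formula (which identifies $\partial U^{-}/\partial\nu_{\Omega^o}$ with $\mu^o$, since $U\equiv 0$ inside forces $\partial U^{+}/\partial\nu_{\Omega^o}=0$), one obtains
\begin{equation*}
\int_{\mathbb{R}^n\setminus\overline{\Omega^o}}|\nabla U|^2\,dx = c\int_{\partial\Omega^o}\mu^o\,d\sigma = 0,
\end{equation*}
so $U-c$ is constant on the exterior and hence identically zero by its behavior at infinity, forcing $c=0$. Thus $U\equiv 0$ on $\mathbb{R}^n$, and the jump relations for the normal derivative of the single layer on $\partial\Omega^o$ and on $\epsilon\partial\omega^i$ deliver $\mu^o=0$ and $\mu^i=0$.

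For existence, given a harmonic $u\in C^{1,\alpha}(\overline{\Omega(\epsilon)})$, I would reduce to a coupled system of boundary integral equations for $(\mu^o,\mu^i)$ by taking normal derivatives of the ansatz on each of $\partial\Omega^o$ and $\epsilon\partial\omega^i$ via $\partial_{\nu}v^{\pm}=\mp\tfrac{1}{2}\mu+W^{\ast}\mu$, with $\xi$ determined by matching a single Dirichlet trace and the zero-mean constraint on $\mu^o$ absorbing the corresponding redundancy. The diagonal blocks of the system are the Fredholm-of-index-zero operators $\pm\tfrac{1}{2}I+W^{\ast}$ on each component, while the off-diagonal blocks have smooth kernels (because $\partial\Omega^o$ and $\epsilon\partial\omega^i$ are disjoint) and are therefore compact between the relevant H\"older spaces. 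The full operator is thus Fredholm of index zero, and the injectivity established in the previous step promotes it to an isomorphism through the Riesz--Fredholm alternative. This is the classical scheme laid out in Folland \cite[Ch.~3]{Fo95} and Dalla Riva, Lanza de Cristoforis, and Musolino \cite[proof of Prop.~6.49]{DaLaMu21}, which is why the authors merely cite these references. The main obstacle is precisely this surjectivity step: assembling the boundary system, identifying its Fredholm principal part and isolating the compact perturbation, and then invoking injectivity. The injectivity argument itself is elementary once one recognizes that the zero-mean condition plays a double role, ensuring $\nabla U\in L^2$ outside $\Omega^o$ and simultaneously annihilating the boundary term in the energy identity.
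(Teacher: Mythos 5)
Your proof is correct and follows essentially the same route as the paper, which establishes this lemma only by citing the classical potential-theoretic argument of Folland [Ch.~3] and Dalla Riva, Lanza de Cristoforis, and Musolino [proof of Prop.~6.49]: injectivity via the maximum principle in $\epsilon\omega^i$ together with an exterior energy identity, and surjectivity via the Fredholm alternative for the associated system of boundary integral equations. The only cosmetic remark is that for $n\geq 3$ one has $\nabla U\in L^2$ of the exterior for any H\"older density, so the zero-mean condition on $\mu^o$ is not needed for integrability but rather, exactly as in your displayed identity, to annihilate the flux term $c\int_{\partial\Omega^o}\mu^o\,d\sigma$ contributed by the sphere at infinity.
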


By exploiting Lemma \ref{lem:rep}, we can establish a correspondence between the solutions of problem \eqref{bvpdelta} and those of a (nonlinear) system of integral equations.

\begin{proposition}\label{prop:corr}
Let $\epsilon \in ]0,\epsilon_0[$. Then the map from the set of triples $(\mu^o,\mu^i,\xi)\in  C^{0,\alpha}(\partial\Omega^o)_{0}\times C^{0,\alpha}(\partial\omega^i)\times {\mathbb{R}}$ such that
\begin{align}
&-\frac{1}{2}\mu^o(x)+\int_{\partial \Omega^o}\nu_{\Omega^o}(x)\cdot \nabla S_n(x-y)\mu^o(y)\, d\sigma_y\nonumber\\
&\qquad+\int_{\partial \omega^i}\nu_{\Omega^o}(x)\cdot \nabla S_n(x-\epsilon s)\mu^i(s)\, d\sigma_s=g^o(x) \qquad \forall x \in \partial \Omega^o\, ,\label{eq:corr:1a}\\
&\frac{1}{2}\mu^i(t)+\epsilon^{n-1}\int_{\partial \Omega^o}\nu_{\omega^i}(t)\cdot \nabla S_n(\epsilon t-y)\mu^o(y)\, d\sigma_y+\int_{\partial \omega^i}\nu_{\omega^i}(t)\cdot \nabla S_n(t-s)\mu^i(s)\, d\sigma_s \nonumber\\
&\qquad=\epsilon^{n-1} \delta(\epsilon) F_\epsilon \Bigg(\int_{\partial \Omega^o}S_n(\epsilon t-y)\mu^o(y)\, d\sigma_y+\frac{1}{\epsilon^{n-2}} \int_{\partial \omega^i}S_n(t-s)\mu^i(s)\, d\sigma_s\nonumber\\&\quad\qquad+\frac{\xi}{\delta(\epsilon)\epsilon^{n-1}}\Bigg)+g^i(t)\frac{\epsilon^{n-1}}{\rho(\epsilon)} \qquad \forall t \in \partial \omega^i\, ,\label{eq:corr:1b}
\end{align}
to the set of those functions $u\in C^{1,\alpha}(\overline{\Omega(\epsilon)})$ which solve problem \eqref{bvpdelta},  which takes a triple $(\mu^o,\mu^i,\xi)$ to
\begin{equation}\label{eq:corr:2}
\int_{\partial \Omega^o}S_n(x-y)\mu^o(y)\, d\sigma_y+ \int_{\partial \omega^i}S_n(x-\epsilon s)\mu^i(s)\, d\sigma_s+\frac{\xi}{\delta(\epsilon)\epsilon^{n-1}}\qquad \forall x \in \overline{\Omega(\epsilon)}
\end{equation}
is a bijection.
\end{proposition}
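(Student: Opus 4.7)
The strategy exploits Lemma \ref{lem:rep}, which to every harmonic $u\in C^{1,\alpha}(\overline{\Omega(\epsilon)})$ assigns a unique triple $(\mu^o,\mu^i,\xi)\in C^{0,\alpha}(\partial\Omega^o)_0\times C^{0,\alpha}(\partial\omega^i)\times\mathbb{R}$ such that $u$ is given by \eqref{eq:corr:2}, and conversely any function of the form \eqref{eq:corr:2} built from such a triple is automatically harmonic in $\Omega(\epsilon)$ and lies in $C^{1,\alpha}(\overline{\Omega(\epsilon)})$ by the mapping properties of single layer potentials with H\"older continuous densities recalled in the excerpt. It therefore suffices to show that, for any $u$ of this form, the two boundary conditions of \eqref{bvpdelta} are equivalent to the pair of integral equations \eqref{eq:corr:1a}--\eqref{eq:corr:1b} on $(\mu^o,\mu^i,\xi)$; the bijectivity then follows formally from the uniqueness clause of Lemma \ref{lem:rep}.

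On $\partial\Omega^o$ the computation is straightforward. The first term in \eqref{eq:corr:2} is a single layer with density $\mu^o$ on $\partial\Omega^o$, and its normal derivative taken from inside $\Omega^o$ produces, via the jump formula recalled in the excerpt, the combination $-\tfrac{1}{2}\mu^o+W^\ast[\partial\Omega^o,\mu^o]$. The second term is, after the change of variable $y=\epsilon s$, a single layer over the disjoint surface $\epsilon\partial\omega^i$, hence $C^{\infty}$ in a neighborhood of $\partial\Omega^o$, so its normal derivative is obtained by differentiating under the integral. The constant term contributes nothing. Equating the sum to $g^o$ is exactly \eqref{eq:corr:1a}.

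On $\epsilon\partial\omega^i$ the main work lies in the $\epsilon$-bookkeeping. Fix $t\in\partial\omega^i$ and set $x=\epsilon t$, so that $\nu_{\epsilon\omega^i}(\epsilon t)=\nu_{\omega^i}(t)$. The single layer on $\partial\Omega^o$ is smooth near $\epsilon\partial\omega^i$ and, after multiplication by $\epsilon^{n-1}$, gives the first integral in \eqref{eq:corr:1b}. For the integral over $\partial\omega^i$ in \eqref{eq:corr:2}, the change of variable $y=\epsilon s$ rewrites it as a single layer on $\epsilon\partial\omega^i$ with rescaled density $\mu^i(\cdot/\epsilon)/\epsilon^{n-1}$; applying the jump formula from the exterior side of $\epsilon\omega^i$ (the $+\tfrac{1}{2}$ jump, because $\Omega(\epsilon)\subset\mathbb{R}^n\setminus\epsilon\overline{\omega^i}$) together with the homogeneity $\nabla S_n(\epsilon z)=\epsilon^{-(n-1)}\nabla S_n(z)$ yields $\epsilon^{-(n-1)}$ times $\tfrac{1}{2}\mu^i(t)+\int_{\partial\omega^i}\nu_{\omega^i}(t)\cdot\nabla S_n(t-s)\mu^i(s)\,d\sigma_s$. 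In parallel, for the argument of $F_\epsilon$, using $S_n(\epsilon(t-s))=\epsilon^{-(n-2)}S_n(t-s)$ in the evaluation of $u(\epsilon t)$ produces the factor $1/\epsilon^{n-2}$ in front of $\int_{\partial\omega^i}S_n(t-s)\mu^i(s)\,d\sigma_s$, while the constant $\xi/(\delta(\epsilon)\epsilon^{n-1})$ is carried unchanged. Multiplying the Robin condition through by $\epsilon^{n-1}$ then delivers \eqref{eq:corr:1b} exactly.

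Bijectivity is immediate from these computations. Given any solution $u$ of \eqref{bvpdelta}, Lemma \ref{lem:rep} furnishes the unique triple representing it through \eqref{eq:corr:2}, and the equivalences established above force this triple to satisfy \eqref{eq:corr:1a}--\eqref{eq:corr:1b}; this gives surjectivity. Injectivity follows directly from the uniqueness part of Lemma \ref{lem:rep}, since two distinct triples in $C^{0,\alpha}(\partial\Omega^o)_0\times C^{0,\alpha}(\partial\omega^i)\times\mathbb{R}$ yield two distinct functions of the form \eqref{eq:corr:2}. The only genuinely delicate step in the whole argument is the $\epsilon$-rescaling of the third paragraph: once the degrees of homogeneity $-(n-2)$ for $S_n$ and $-(n-1)$ for $\nabla S_n$ are tracked correctly, the derivation is a careful but routine application of the jump relations for single layer potentials.
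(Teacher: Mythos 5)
Your proposal is correct and takes essentially the same route as the paper's proof: both reduce the statement to Lemma \ref{lem:rep} (existence and uniqueness of the representing triple) combined with the jump relations for the normal derivative of the single layer potential, checked on each boundary component. Your version merely carries out explicitly the $\epsilon$-rescaling and homogeneity bookkeeping that the paper's proof leaves implicit, and your sign conventions (interior trace on $\partial\Omega^o$, exterior trace on $\epsilon\partial\omega^i$) and the resulting factors of $\epsilon^{-(n-1)}$ and $\epsilon^{-(n-2)}$ are all consistent with equations \eqref{eq:corr:1a}--\eqref{eq:corr:1b}.
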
	
\begin{proof}
 If $(\mu^o,\mu^i,\xi)\in  C^{0,\alpha}(\partial\Omega^o)_{0}\times C^{0,\alpha}(\partial\omega^i)\times {\mathbb{R}}$ then we know that the function in \eqref{eq:corr:2} belongs to $ C^{1,\alpha}(\overline{\Omega(\epsilon)})$ and is harmonic in $\Omega(\epsilon)$. Moreover, if $(\mu^o,\mu^i,\xi)$  satisfies system \eqref{eq:corr:1a}-\eqref{eq:corr:1b}, then the jump formula for the normal derivative of the single layer potential implies the validity of the boundary condition in problem \eqref{bvpdelta}. Hence, the function in \eqref{eq:corr:2} solves problem \eqref{bvpdelta}.

Conversely, if $u\in C^{1,\alpha}(\overline{\Omega(\epsilon)})$ satisfies  \eqref{bvpdelta}, then  Lemma \ref{lem:rep} for harmonic functions ensures that there exists a unique triple $(\mu^o,\mu^i,\xi)\in  C^{0,\alpha}(\partial\Omega^o)_{0}\times C^{0,\alpha}(\partial\omega^i)\times {\mathbb{R}}$ such that
\[
u(x)=\int_{\partial \Omega^o}S_n(x-y)\mu^o(y)\, d\sigma_y+ \int_{\partial \omega^i}S_n(x-\epsilon s)\mu^i(s)\, d\sigma_s+\frac{\xi}{\delta(\epsilon)\epsilon^{n-1}}\qquad \forall x \in \overline{\Omega(\epsilon)}\, .
\]
Then the  formula for the  normal derivative of a single layer potential and the boundary conditions in
\eqref{bvpdelta} imply that  \eqref{eq:corr:1a}-\eqref{eq:corr:1b} are satisfied. Hence, the map of the statement is a bijection.
\end{proof}

Now that the correspondence between the solutions of boundary value problem \eqref{bvpdelta} and those of the system of integral equations \eqref{eq:corr:1a}-\eqref{eq:corr:1b} is established, we wish to study the behavior of the solutions to system  \eqref{eq:corr:1a}-\eqref{eq:corr:1b} as $\epsilon \to 0$. Then we note that if $\epsilon \in ]0,\epsilon_0[$ we can write
\begin{align*}
&\epsilon^{n-1} \delta(\epsilon) F_\epsilon \Bigg(\int_{\partial \Omega^o}S_n(\epsilon t-y)\mu^o(y)\, d\sigma_y+\frac{1}{\epsilon^{n-2}} \int_{\partial \omega^i}S_n(t-s)\mu^i(s)\, d\sigma_s+\frac{\xi}{\delta(\epsilon)\epsilon^{n-1}}\Bigg)\\
&=\epsilon^{n-1} \delta(\epsilon) F_\epsilon \Bigg(\frac{1}{\epsilon^{n-1} \delta(\epsilon)}\bigg(\epsilon^{n-1} \delta(\epsilon)\int_{\partial \Omega^o}S_n(\epsilon t-y)\mu^o(y)\, d\sigma_y\nonumber\\
&\quad\qquad+\epsilon \delta(\epsilon) \int_{\partial \omega^i}S_n(t-s)\mu^i(s)\, d\sigma_s+\xi\bigg)\Bigg)\qquad \forall t \in \partial \omega^i\, .
\end{align*}

We now wish to analyze   equation \eqref{eq:corr:1b} for $\epsilon$ small. As we have done in \cite{MuMi18}, we need to make some structural assumption on the nonlinearity, {\it i.e.}, on the family of functions $\mathbb{R}\ni\tau\mapsto \epsilon^{n-1} \delta(\epsilon) F_\epsilon \Big(\frac{1}{\epsilon^{n-1} \delta(\epsilon)}\tau\Big)$ for $\epsilon$ close to $0$. So we assume that
\begin{equation}\label{eq:addass:1}
\begin{split}
&\text{there exist $\epsilon_1 \in ]0,\epsilon_0[$, $m \in \mathbb{N}$, a real analytic function $\tilde{F}$ from $\mathbb{R}^{m+1}$ to $\mathbb{R}$,}\\
&\text{a function $\eta(\cdot)$ from $]0,\epsilon_1[$ to $\mathbb{R}^{m}$ such that $\eta_0\equiv \lim_{\epsilon\to 0}\eta(\epsilon) \in \mathbb{R}^m$ and that}\\
&\text{$\epsilon^{n-1} \delta(\epsilon) F_\epsilon \Big(\frac{1}{\epsilon^{n-1} \delta(\epsilon)}\tau\Big)=\tilde{F}(\tau,\eta(\epsilon))$ for all $(\tau,\epsilon) \in\mathbb{R}\times ]0,\epsilon_1[$.}
\end{split}
\end{equation}

As a simple example, one can consider as $F_\epsilon$ a small polynomial perturbation of the identity. For example, one can take
\[
F_\epsilon(z)=z+h(\epsilon) z^m\, ,
\]
where $m\in \mathbb{N}\setminus \{0,1\}$ and $h$ a certain function from $]0,\epsilon_1[$ to $\mathbb{R}$. Then we have
\[
\epsilon^{n-1} \delta(\epsilon) F_\epsilon \Big(\frac{1}{\epsilon^{n-1} \delta(\epsilon)}\tau\Big)=\tau+\frac{h(\epsilon)}{(\epsilon^{n-1}\delta(\epsilon))^{m-1}} \tau^m\, .
\]
If 
\[
\lim_{\epsilon \to 0}\frac{h(\epsilon)}{(\epsilon^{n-1}\delta(\epsilon))^{m-1}}\in \mathbb{R}\, ,
\]
then one has
\[
\eta(\epsilon)=\frac{h(\epsilon)}{(\epsilon^{n-1}\delta(\epsilon))^{m-1}}\, ,\qquad \tilde{F}(\tau,\eta(\epsilon))=\tau +\eta(\epsilon) \tau^m=\tau+\frac{h(\epsilon)}{(\epsilon^{n-1}\delta(\epsilon))^{m-1}} \tau^m\, .
\]
On the other hand, one could also construct $F_\epsilon$ starting from a given $\tilde{F}$ and $\eta(\epsilon)$. This would allow to generate more involved nonlinearities (even if perhaps less natural).

Here we observe that different structures of the nonlinearity may be tackled by modifying our approach. Although the type of nonlinearity we consider is quite specific, we emphasize that our techniques is not confined to linear boundary conditions and apply also in some nonlinear cases. At the same time, our interest is also in the linear case, since the degeneracy appears as well there. Therefore, for us it is enough to include some (nonlinear) perturbations of the linear case.

\section{Analytic representation formulas for the solution of the boundary value problem}\label{rep}

We observe that,  under the additional assumption \eqref{eq:addass:1}, equations \eqref{eq:corr:1a}-\eqref{eq:corr:1b} take the  form
\begin{align}
&-\frac{1}{2}\mu^o(x)+\int_{\partial \Omega^o}\nu_{\Omega^o}(x)\cdot \nabla S_n(x-y)\mu^o(y)\, d\sigma_y\nonumber\\
&\qquad+\int_{\partial \omega^i}\nu_{\Omega^o}(x)\cdot \nabla S_n(x-\epsilon s)\mu^i(s)\, d\sigma_s=g^o(x) \qquad \forall x \in \partial \Omega^o\, ,\label{eq:corr:1an}\\
&\frac{1}{2}\mu^i(t)+\epsilon^{n-1}\int_{\partial \Omega^o}\nu_{\omega^i}(t)\cdot \nabla S_n(\epsilon t-y)\mu^o(y)\, d\sigma_y+\int_{\partial \omega^i}\nu_{\omega^i}(t)\cdot \nabla S_n(t-s)\mu^i(s)\, d\sigma_s \nonumber\\
&\qquad=\tilde{F} \Bigg(\epsilon^{n-1} \delta(\epsilon)\int_{\partial \Omega^o}S_n(\epsilon t-y)\mu^o(y)\, d\sigma_y+\epsilon \delta(\epsilon) \int_{\partial \omega^i}S_n(t-s)\mu^i(s)\, d\sigma_s+\xi,\eta(\epsilon)\Bigg)\nonumber\\
&\qquad \quad+g^i(t)\frac{\epsilon^{n-1}}{\rho(\epsilon)} \qquad \forall t \in \partial \omega^i\, ,\label{eq:corr:1bn}
\end{align}
for all $\epsilon \in ]0,\epsilon_1[$. We would like to pass to the limit as $\epsilon \to 0$ in equations \eqref{eq:corr:1an}-\eqref{eq:corr:1bn}. However, to do so, we need to know the asymptotic behavior for $\epsilon$ close to $0$ of the quantities $\epsilon \delta(\epsilon)$ and $\frac{\epsilon^{n-1}}{\rho(\epsilon)}$ which appear in \eqref{eq:corr:1bn}. Accordingly, we now assume that
\begin{equation}\label{eq:Lmbd:limass}
d_0\equiv \lim_{\epsilon\to 0}\epsilon\delta(\epsilon) \in \mathbb{R} \, ,\qquad r_0\equiv \lim_{\epsilon\to 0}\frac{\epsilon^{n-1}}{\rho(\epsilon)} \in \mathbb{R}\, .
\end{equation}

Motivated by \eqref{eq:corr:1an}-\eqref{eq:corr:1bn}, we replace the quantities $\epsilon \delta(\epsilon)$, $\eta(\epsilon)$, $ \frac{\epsilon^{n-1}}{\rho(\epsilon)}$, by the auxiliary variables $\gamma_1$, $\gamma_2$, $\gamma_3$, respectively, and we now introduce the operator $\Lambda_n\equiv (\Lambda^o_n, \Lambda^i_n)$ from $]-\epsilon_1,\epsilon_1[\times \mathbb{R}^{m+2}\times C^{0,\alpha}(\partial\Omega^o)_0\times C^{0,\alpha}(\partial\omega^i)\times \mathbb{R}$ to $C^{0,\alpha}(\partial\Omega^o)\times C^{0,\alpha}(\partial\omega^i)$ defined by
\begin{align}
\Lambda^o_n[\epsilon,\gamma_1,\gamma_2,&\gamma_3,\mu^o,\mu^i,\xi](x)\equiv-\frac{1}{2}\mu^o(x)+\int_{\partial \Omega^o}\nu_{\Omega^o}(x)\cdot \nabla S_n(x-y)\mu^o(y)\, d\sigma_y\nonumber\\
&\qquad+\int_{\partial \omega^i}\nu_{\Omega^o}(x)\cdot \nabla S_n(x-\epsilon s)\mu^i(s)\, d\sigma_s-g^o(x) \qquad \forall x \in \partial \Omega^o\, ,\label{eq:Lmbd:1an}
\end{align}
\begin{align}
\Lambda^i_n&[\epsilon,\gamma_1,\gamma_2,\gamma_3,\mu^o,\mu^i,\xi](t)\equiv\frac{1}{2}\mu^i(t)+\epsilon^{n-1}\int_{\partial \Omega^o}\nu_{\omega^i}(t)\cdot \nabla S_n(\epsilon t-y)\mu^o(y)\, d\sigma_y\nonumber\\
&+\int_{\partial \omega^i}\nu_{\omega^i}(t)\cdot \nabla S_n(t-s)\mu^i(s)\, d\sigma_s \nonumber\\
&-\tilde{F} \Bigg(\epsilon^{n-2}\gamma_1\int_{\partial \Omega^o}S_n(\epsilon t-y)\mu^o(y)\, d\sigma_y+\gamma_1 \int_{\partial \omega^i}S_n(t-s)\mu^i(s)\, d\sigma_s+\xi,\gamma_2\Bigg)\nonumber\\
& \quad-g^i(t)\gamma_3 \qquad \forall t \in \partial \omega^i\, ,\label{eq:Lmbd:1bn}
\end{align}
for all $(\epsilon,\gamma_1,\gamma_2,\gamma_3,\mu^o,\mu^i,\xi)\in ]-\epsilon_1,\epsilon_1[\times \mathbb{R}^{m+2}\times C^{0,\alpha}(\partial\Omega^o)_0\times C^{0,\alpha}(\partial\omega^i)\times \mathbb{R}$.

Then, if $\epsilon\in ]0,\epsilon_1[$, in view of definitions \eqref{eq:Lmbd:1an}-\eqref{eq:Lmbd:1bn},  the system of equations
\begin{align}
&\Lambda^o_n[\epsilon,\epsilon\delta(\epsilon),\eta(\epsilon),\frac{\epsilon^{n-1}}{\rho(\epsilon)},\mu^o,\mu^i,\xi](x)=0\qquad \forall x \in \partial \Omega^o\, ,\label{eq:equiv:1an}\\
&\Lambda^i_n[\epsilon,\epsilon\delta(\epsilon),\eta(\epsilon),\frac{\epsilon^{n-1}}{\rho(\epsilon)},\mu^o,\mu^i,\xi](t)=0\qquad \forall t \in \partial \omega^i\, ,\label{eq:equiv:1bn}
\end{align}
is equivalent to the system  \eqref{eq:corr:1an}-\eqref{eq:corr:1bn}. Then if we let $\epsilon \to 0$ in \eqref{eq:equiv:1an}-\eqref{eq:equiv:1bn}, we obtain

\begin{align}
&-\frac{1}{2}\mu^o(x)+\int_{\partial \Omega^o}\nu_{\Omega^o}(x)\cdot \nabla S_n(x-y)\mu^o(y)\, d\sigma_y\nonumber\\
&\qquad+\nu_{\Omega^o}(x)\cdot \nabla S_n(x)\int_{\partial \omega^i}\mu^i(s)\, d\sigma_s=g^o(x) \qquad \forall x \in \partial \Omega^o\, ,\label{eq:limsys:1an}\\
&\frac{1}{2}\mu^i(t)+\int_{\partial \omega^i}\nu_{\omega^i}(t)\cdot \nabla S_n(t-s)\mu^i(s)\, d\sigma_s \nonumber\\
&\qquad=\tilde{F} \Bigg(d_0 \int_{\partial \omega^i}S_n(t-s)\mu^i(s)\, d\sigma_s+\xi,\eta_0\Bigg)+g^i(t)r_0 \qquad \forall t \in \partial \omega^i\, .\label{eq:limsys:1bn}
\end{align}

Now we would like to prove for $\epsilon \in ]0,\epsilon_1[$ the existence of solutions $(\mu^o,\mu^i,\xi)$ to \eqref{eq:equiv:1an}-\eqref{eq:equiv:1bn} around a solution of the limiting system  \eqref{eq:limsys:1an}-\eqref{eq:limsys:1bn}. Therefore, we now further assume that
\begin{equation}\label{eq:exsol:n}
\begin{split}
&\text{the system \eqref{eq:limsys:1an}-\eqref{eq:limsys:1bn} in the unknown $(\mu^o,\mu^i,\xi)$ admits}\\
&\text{ a solution $(\tilde{\mu}^o,\tilde{\mu}^i,\tilde{\xi})$ in $C^{0,\alpha}(\partial \Omega^o)_0\times C^{0,\alpha}(\partial \omega^i) \times \mathbb{R}$.}
\end{split}
\end{equation}
We do not discuss here conditions on $\tilde{F}$ ensuring the existence of a solution of  \eqref{eq:limsys:1an}-\eqref{eq:limsys:1bn}. However, they can be obtained by arguing as in Lanza de Cristoforis \cite[Appendix C]{La07} or in \cite{MuMi18}.

We note that, if $(\tilde{\mu}^o,\tilde{\mu}^i,\tilde{\xi})$ is a solution of the system \eqref{eq:limsys:1an}-\eqref{eq:limsys:1bn}, then, by integrating \eqref{eq:limsys:1an} on $\partial \Omega^o$ and by the equalities
\[
\int_{\partial \Omega^o}\int_{\partial \Omega^o}\nu_{\Omega^o}(x)\cdot \nabla S_n(x-y)\tilde{\mu}^o(y)\, d\sigma_y\, d\sigma_x =\frac{1}{2}\int_{\partial \Omega^o}\tilde{\mu}^o(y)\, d\sigma_y\,
\]
(cf.~Dalla Riva, Lanza de Cristoforis, and Musolino  \cite[Lemma~6.11]{DaLaMu21}) and
$
\int_{\partial \Omega^o}\nu_{\Omega^o}(x)\cdot \nabla S_n(x)\, d\sigma_x=1\,
$
(cf.~Dalla Riva, Lanza de Cristoforis, and Musolino  \cite[Corollary~4.6]{DaLaMu21}), we obtain
$
\int_{\partial \omega^i}\tilde{\mu}^i(s)\, d\sigma_s=\int_{\partial \Omega^o} g^o(x)\, d\sigma_x\, .
$
In the following proposition, we investigate the system of integral equations \eqref{eq:corr:1an}-\eqref{eq:corr:1bn}, by applying the Implicit Function Theorem to $\Lambda_n$, under suitable assumptions on  $\partial_\tau \tilde{F} \Bigg(d_0 \int_{\partial \omega^i}S_n(t-s)\tilde{\mu}^i(s)\, d\sigma_s+\tilde{\xi},\eta_0\Bigg)$, where $\partial_\tau \tilde{F}$ denotes the partial derivative with respect to the variable $\tau$  of the function $(\tau,
\eta)\mapsto \tilde{F}(\tau,\eta)$.

\begin{proposition}\label{prop:Lmbdn}
Let assumptions \eqref{eq:addass:1}, \eqref{eq:Lmbd:limass} hold. Let $(\tilde{\mu}^o,\tilde{\mu}^i,\tilde{\xi})$ be as in  \eqref{eq:exsol:n}. Assume that
\[
\begin{split}
&\int_{\partial \omega^i}\partial_\tau \tilde{F} \Bigg(d_0 \int_{\partial \omega^i}S_n(t-s)\tilde{\mu}^i(s)\, d\sigma_s+\tilde{\xi},\eta_0\Bigg)\, d\sigma_t \neq 0 \,
\end{split}
\]
and if $d_0\neq 0$ also that
\[
\begin{split}
&\partial_\tau \tilde{F} \Bigg(d_0 \int_{\partial \omega^i}S_n(t-s)\tilde{\mu}^i(s)\, d\sigma_s+\tilde{\xi},\eta_0\Bigg) \geq 0 \qquad \forall t \in \partial \omega^i\, .
\end{split}
\]
Then there exist $\epsilon_2 \in ]0,\epsilon_1[$, an open neighborhood $\mathcal{U}$ of $(d_0,\eta_0,r_0)$ in $\mathbb{R}^{m+2}$, an open neighborhood $\mathcal{V}$ of $(\tilde{\mu}^o,\tilde{\mu}^i,\tilde{\xi})$ in $C^{0,\alpha}(\partial \Omega^o)_0\times C^{0,\alpha}(\partial \omega^i) \times \mathbb{R}$, and a real analytic map $(M^o,M^i, \Xi)$ from $]-\epsilon_2,\epsilon_2[\times \mathcal{U}$ to $\mathcal{V}$ such that
\[
\Bigg(\epsilon\delta(\epsilon),\eta(\epsilon),\frac{\epsilon^{n-1}}{\rho(\epsilon)}\Bigg) \in \mathcal{U}\qquad \forall \epsilon \in ]0,\epsilon_2[\, ,
\]
and such that the set of zeros of $\Lambda_n$ in
$]-\epsilon_2,\epsilon_2[\times \mathcal{U}\times \mathcal{V}$ coincides with the graph of $(M^o,M^i,\Xi)$. In particular, $\Big(M^o[0,d_0,\eta_0,r_0],M^i[0,d_0,\eta_0,r_0],\Xi[0,d_0,\eta_0,r_0]\Big)=(\tilde{\mu}^o,\tilde{\mu}^i,\tilde{\xi})$.
\end{proposition}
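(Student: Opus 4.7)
The strategy is to apply the real-analytic Implicit Function Theorem to $\Lambda_n$ at the point $P_0\equiv(0,d_0,\eta_0,r_0,\tilde\mu^o,\tilde\mu^i,\tilde\xi)$. Real-analyticity of $\Lambda_n$ on a neighbourhood of $P_0$ follows from standard results on the analytic dependence of boundary integral operators on their shape parameter (here the dilation factor $\epsilon$), coupled with the composition rules for the Nemytskii-type operator induced by the real-analytic function $\tilde F$; all these ingredients are available in \cite{DaLaMu21}. Next, $\Lambda_n[P_0]=0$ is immediate: putting $\epsilon=0$, $\gamma_1=d_0$, $\gamma_2=\eta_0$, $\gamma_3=r_0$ in \eqref{eq:Lmbd:1an}-\eqref{eq:Lmbd:1bn} and using $n\geq 3$ to kill the prefactors $\epsilon^{n-1}$ and $\epsilon^{n-2}\gamma_1$, one recovers the limiting system \eqref{eq:limsys:1an}-\eqref{eq:limsys:1bn}, which $(\tilde\mu^o,\tilde\mu^i,\tilde\xi)$ solves by \eqref{eq:exsol:n}. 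Hence the proof reduces to showing that the partial Fr\'echet differential
$$
T\equiv\partial_{(\mu^o,\mu^i,\xi)}\Lambda_n[P_0]\colon C^{0,\alpha}(\partial\Omega^o)_0\times C^{0,\alpha}(\partial\omega^i)\times\mathbb R\to C^{0,\alpha}(\partial\Omega^o)\times C^{0,\alpha}(\partial\omega^i)
$$
is a linear homeomorphism.

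A direct computation shows that $T=(T^o,T^i)$ has the block-triangular structure
\begin{align*}
T^o(\bar\mu^o,\bar\mu^i)(x)&=-\tfrac12\bar\mu^o(x)+W^\ast[\partial\Omega^o,\bar\mu^o](x)+\nu_{\Omega^o}(x)\cdot\nabla S_n(x)\int_{\partial\omega^i}\bar\mu^i\,d\sigma,\\
T^i(\bar\mu^i,\bar\xi)(t)&=\tfrac12\bar\mu^i(t)+W^\ast[\partial\omega^i,\bar\mu^i](t)-\partial_\tau\tilde F(\tau_0(t),\eta_0)\bigl(d_0\,v[\partial\omega^i,\bar\mu^i](t)+\bar\xi\bigr),
\end{align*}
where $\tau_0(t)\equiv d_0\int_{\partial\omega^i}S_n(t-s)\tilde\mu^i(s)\,d\sigma_s+\tilde\xi$; the dependence on $\bar\mu^o$ drops from $T^i$ because the relevant terms carry a factor $\epsilon^{n-1}$ (or $\epsilon^{n-2}\gamma_1$) that vanishes at $\epsilon=0$. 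As the single-layer and boundary double-layer operators are compact on H\"older spaces, $T$ is a compact perturbation of an invertible operator and therefore Fredholm of index zero, so it suffices to verify injectivity.

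Assume $T(\bar\mu^o,\bar\mu^i,\bar\xi)=0$. Integrating $T^o=0$ over $\partial\Omega^o$ and using the two identities already recalled in the paper, $\int_{\partial\Omega^o}\int_{\partial\Omega^o}\nu_{\Omega^o}(x)\cdot\nabla S_n(x-y)\bar\mu^o(y)\,d\sigma_yd\sigma_x=\tfrac12\int\bar\mu^o$ together with the constraint $\int\bar\mu^o=0$, and $\int_{\partial\Omega^o}\nu_{\Omega^o}\cdot\nabla S_n\,d\sigma=1$, one deduces $\int_{\partial\omega^i}\bar\mu^i=0$. The residual equation $-\tfrac12\bar\mu^o+W^\ast[\partial\Omega^o,\bar\mu^o]=0$ then expresses that the interior Neumann trace of $v^+[\partial\Omega^o,\bar\mu^o]$ is zero, so that $v^+$ is constant in $\Omega^o$; the kernel of $-\tfrac12 I+W^\ast[\partial\Omega^o,\cdot]$ on $C^{0,\alpha}(\partial\Omega^o)$ is one-dimensional and generated by a density of nonzero integral (a classical consequence of the exterior Dirichlet uniqueness in $n\geq 3$), and thus intersects $C^{0,\alpha}(\partial\Omega^o)_0$ only at $0$. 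Hence $\bar\mu^o=0$.

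It remains to analyse $T^i(\bar\mu^i,\bar\xi)=0$ under the constraint $\int\bar\mu^i=0$, and this is the technical heart of the proof. Set $U\equiv v[\partial\omega^i,\bar\mu^i]$ on $\mathbb R^n\setminus\overline{\omega^i}$; as $n\geq 3$, the function $U$ is harmonic and decays at infinity. The jump formula recasts $T^i=0$ as the Robin-type identity $\partial_{\nu_{\omega^i}}v^-=\partial_\tau\tilde F(\tau_0,\eta_0)(d_0 U+\bar\xi)$ on $\partial\omega^i$. Green's first identity on the exterior of $\omega^i$ (legitimized by the decay of $U$) yields
\begin{equation*}
\int_{\mathbb R^n\setminus\overline{\omega^i}}|\nabla U|^2\,dx+d_0\int_{\partial\omega^i}\partial_\tau\tilde F\,U^2\,d\sigma=-\bar\xi\int_{\partial\omega^i}\partial_\tau\tilde F\,U\,d\sigma,
\end{equation*}
whereas integrating $T^i=0$ over $\partial\omega^i$ and using $\int\bar\mu^i=0$ gives $\bar\xi\int_{\partial\omega^i}\partial_\tau\tilde F=-d_0\int_{\partial\omega^i}\partial_\tau\tilde F\,U$, so that $\bar\xi$ is determined by $U$ thanks to the hypothesis $\int\partial_\tau\tilde F\neq 0$. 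Substituting this expression back and applying Cauchy--Schwarz (licit via the sign hypothesis $\partial_\tau\tilde F\geq 0$ when $d_0\neq 0$; the case $d_0=0$ is immediate since the right-hand side vanishes) one obtains $\int|\nabla U|^2\leq 0$, hence $U\equiv 0$ on the exterior by the decay at infinity; continuity of the single layer together with interior Dirichlet uniqueness extends $v[\partial\omega^i,\bar\mu^i]\equiv 0$ to all of $\mathbb R^n$, and the jump of its normal derivative forces $\bar\mu^i=0$. Reinserting $\bar\mu^i=0$ into $T^i=0$ leaves $\partial_\tau\tilde F\cdot\bar\xi\equiv 0$, and $\int\partial_\tau\tilde F\neq 0$ finally gives $\bar\xi=0$. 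With $T$ now known to be a linear homeomorphism, the real-analytic Implicit Function Theorem provides $\epsilon_2$, the open sets $\mathcal U$, $\mathcal V$, and the real-analytic map $(M^o,M^i,\Xi)$ parametrizing the zero set of $\Lambda_n$ around $P_0$; the inclusion $(\epsilon\delta(\epsilon),\eta(\epsilon),\epsilon^{n-1}/\rho(\epsilon))\in\mathcal U$ for $\epsilon\in]0,\epsilon_2[$ follows, after possibly shrinking $\epsilon_2$, from \eqref{eq:Lmbd:limass} and $\eta_0=\lim_{\epsilon\to 0}\eta(\epsilon)$. The main obstacle, as signalled, is the last energy computation: both the nonvanishing of $\int\partial_\tau\tilde F$ (used to recover $\bar\xi$ from the compatibility equation) and the sign condition on $\partial_\tau\tilde F$ (to close the Cauchy--Schwarz loop when $d_0\neq 0$) are required, and it is this coupling of $\bar\mu^i$ with the scalar $\bar\xi$ that accounts for the structural hypotheses in the statement.
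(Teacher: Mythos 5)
Your proposal is correct and follows essentially the same route as the paper: real analyticity of $\Lambda_n$, computation of the partial differential at $(0,d_0,\eta_0,r_0,\tilde{\mu}^o,\tilde{\mu}^i,\tilde{\xi})$, Fredholm of index zero plus injectivity, and then the real-analytic Implicit Function Theorem. The only difference is that where the paper disposes of the coupled $(\overline{\mu}^i,\overline{\xi})$ injectivity by citing the argument of Lanza de Cristoforis and Musolino \cite[proof of Thm.~4.4]{LaMu13bis}, you carry out that exterior energy/Cauchy--Schwarz computation explicitly (correctly using that $d_0\geq 0$ by its definition as a limit of positive quantities), which is a welcome expansion of the cited step rather than a new method.
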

\begin{proof}
By standard results of classical potential theory (see, {\it e.g.}, Dalla Riva, Lanza de Cristoforis, and Musolino  \cite{DaLaMu21}, Miranda \cite{Mi65}, Lanza de Cristoforis and Rossi \cite{LaRo04}), by real analyticity results for integral operators with real analytic kernel (Lanza de Cristoforis and Musolino \cite{LaMu13}), by assumption \eqref{eq:addass:1} and real analyticity results for the composition operator (\cite[p.~10]{BoTo73}, \cite{He82}, and Valent \cite[Thm.~5.2]{Va88}), we deduce that $\Lambda_n$ is a real analytic operator from $]-\epsilon_1,\epsilon_1[\times \mathbb{R}^{m+2}\times C^{0,\alpha}(\partial\Omega^o)_0\times C^{0,\alpha}(\partial\omega^i)\times \mathbb{R}$ to $C^{0,\alpha}(\partial\Omega^o)\times C^{0,\alpha}(\partial\omega^i)$. By standard calculus in Banach spaces, we verify that the partial differential $\partial_{(\mu^o,\mu^i,\xi)}\Lambda_n[0,d_0,\eta_0,r_0,\tilde{\mu}^o,\tilde{\mu}^i,\tilde{\xi}]$ of $\Lambda_n$ at $(0,d_0,\eta_0,r_0,\tilde{\mu}^o,\tilde{\mu}^i,\tilde{\xi})$ with respect to the variable $(\mu^o,\mu^i,\xi)$ is delivered by
\begin{align}
&\partial_{(\mu^o,\mu^i,\xi)}\Lambda^o_n[0,d_0,\eta_0,r_0,\tilde{\mu}^o,\tilde{\mu}^i,\tilde{\xi}](\overline{\mu}^o,\overline{\mu}^i,\overline{\xi})(x)\nonumber\\&\equiv-\frac{1}{2}\overline{\mu}^o(x)+\int_{\partial \Omega^o}\!\!\nu_{\Omega^o}(x)\cdot \nabla S_n(x-y)\overline{\mu}^o(y)\, d\sigma_y+\nu_{\Omega^o}(x)\cdot \nabla S_n(x)\int_{\partial \omega^i}\!\!\overline{\mu}^i(s)\, d\sigma_s \quad \forall x \in \partial \Omega^o\, ,\nonumber
\end{align}
\begin{align}
&\partial_{(\mu^o,\mu^i,\xi)}
\Lambda^i_n[0,d_0,\eta_0,r_0,\tilde{\mu}^o,\tilde{\mu}^i,\tilde{\xi}](\overline{\mu}^o,
\overline{\mu}^i,\overline{\xi})(t) \equiv\frac{1}{2}\overline{\mu}^i(t)+\int_{\partial \omega^i}\nu_{\omega^i}(t)\cdot \nabla S_n(t-s)\overline{\mu}^i(s)\, d\sigma_s \nonumber\\
&-\partial_\tau \tilde{F} \Bigg(d_0 \int_{\partial \omega^i}S_n(t-s)\tilde{\mu}^i(s)\, d\sigma_s+\tilde{\xi},\eta_0\Bigg)\Bigg(d_0 \int_{\partial \omega^i}S_n(t-s)\overline{\mu}^i(s)\, d\sigma_s+\overline{\xi}\Bigg) \quad \forall t \in \partial \omega^i\, ,\nonumber
\end{align}
for all $(\overline{\mu}^o,\overline{\mu}^i,\overline{\xi}) \in C^{0,\alpha}(\partial\Omega^o)_0\times C^{0,\alpha}(\partial\omega^i)\times \mathbb{R}$.  Now we want to show that the partial differential $\partial_{(\mu^o,\mu^i,\xi)}\Lambda_n[0,d_0,\eta_0,r_0,\tilde{\mu}^o,\tilde{\mu}^i,\tilde{\xi}]$ is a homeomorphism from $C^{0,\alpha}(\partial\Omega^o)_0\times C^{0,\alpha}(\partial\omega^i)\times \mathbb{R}$ onto $C^{0,\alpha}(\partial\Omega^o) \times C^{0,\alpha}(\partial\omega^i)$. Since $\partial_{(\mu^o,\mu^i,\xi)}\Lambda_n[0,d_0,\eta_0,r_0,\tilde{\mu}^o,\tilde{\mu}^i,\tilde{\xi}]$ is the sum of an invertible operator and a compact operator, one immediately verifies that it is a Fredholm operator of index $0$. Therefore, in order to prove that the operator $\partial_{(\mu^o,\mu^i,\xi)}\Lambda_n[0,d_0,\eta_0,r_0,\tilde{\mu}^o,\tilde{\mu}^i,\tilde{\xi}]$ is homeomorphism, it suffices to prove that it is injective. So let us assume that
\[
\partial_{(\mu^o,\mu^i,\xi)}\Lambda_n[0,d_0,\eta_0,r_0,\tilde{\mu}^o,\tilde{\mu}^i,\tilde{\xi}](\overline{\mu}^o,\overline{\mu}^i,\overline{\xi})=0\, .
\]
By integrating on $\partial \Omega^o$ equality
\[
\partial_{(\mu^o,\mu^i,\xi)}\Lambda^o_n[0,d_0,\eta_0,r_0,\tilde{\mu}^o,\tilde{\mu}^i,\tilde{\xi}](\overline{\mu}^o,\overline{\mu}^i,\overline{\xi})(x)=0 \qquad \forall x \in \partial \Omega^o\, ,
\]
and by the equalities
\[
\int_{\partial \Omega^o}\int_{\partial \Omega^o}\nu_{\Omega^o}(x)\cdot \nabla S_n(x-y)\overline{\mu}^o(y)\, d\sigma_y\, d\sigma_x =\frac{1}{2}\int_{\partial \Omega^o}\overline{\mu}^o(y)\, d\sigma_y\,
\]
(cf.~Dalla Riva, Lanza de Cristoforis, and Musolino  \cite[Lemma~6.11]{DaLaMu21}) and
$
\int_{\partial \Omega^o}\nu_{\Omega^o}(x)\cdot \nabla S_n(x)\, d\sigma_x=1\,
$
(cf.~Dalla Riva, Lanza de Cristoforis, and Musolino  \cite[Corollary~4.6]{DaLaMu21}), we obtain
\begin{equation}\label{eq:diffLmbd:intn}
\int_{\partial \omega^i}\overline{\mu}^i(s)\, d\sigma_s=0\, .
\end{equation}
As a consequence,
\[
-\frac{1}{2}\overline{\mu}^o(x)+\int_{\partial \Omega^o}\nu_{\Omega^o}(x)\cdot \nabla S_n(x-y)\overline{\mu}^o(y)\, d\sigma_y=0 \qquad \forall x \in \partial \Omega^o\, ,
\]
and thus by Dalla Riva, Lanza de Cristoforis, and Musolino  \cite[Theorem 6.25]{DaLaMu21} since $\int_{\partial \Omega^o}\overline{\mu}^o\, d\sigma=0$, we have $\overline{\mu}^o=0$. By \eqref{eq:diffLmbd:intn} and the same argument of Lanza de Cristoforis and Musolino \cite[proof of Thm.~4.4]{LaMu13bis}, equality
\[
\partial_{(\mu^o,\mu^i,\xi)}\Lambda^i_n[0,d_0,\eta_0,r_0,\tilde{\mu}^o,\tilde{\mu}^i,\tilde{\xi}](\overline{\mu}^o,\overline{\mu}^i,\overline{\xi})(t)=0 \qquad \forall t \in \partial \omega^i\,
\]
implies that $(\overline{\mu}^i,\overline{\xi})=0$. In conclusion, we have shown that $\partial_{(\mu^o,\mu^i,\xi)}\Lambda_n[0,d_0,\eta_0,r_0,\tilde{\mu}^o,\tilde{\mu}^i,\tilde{\xi}]$ is injective, and thus, being a Fredholm operator of index $0$, also a homeomorphism. As a consequence, we can apply  the Implicit Function Theorem for real analytic maps in Banach spaces (cf.~Deimling \cite[Thm.~15.3]{De85}) and we  deduce that  there exist $\epsilon_2 \in ]0,\epsilon_1[$, an open neighborhood $\mathcal{U}$ of $(d_0,\eta_0,r_0)$ in $\mathbb{R}^{m+2}$, an open neighborhood $\mathcal{V}$ of $(\tilde{\mu}^o,\tilde{\mu}^i,\tilde{\xi})$ in $C^{0,\alpha}(\partial \Omega^o)_0\times C^{0,\alpha}(\partial \omega^i) \times \mathbb{R}$, and a real analytic map $(M^o,M^i, \Xi)$ from $]-\epsilon_2,\epsilon_2[\times \mathcal{U}$ to $\mathcal{V}$ such that $\Big(\epsilon\delta(\epsilon),\eta(\epsilon),\frac{\epsilon^{n-1}}{\rho(\epsilon)}\Big) \in \mathcal{U}$  for all $\epsilon \in ]0,\epsilon_2[$,  such that the set of zeros of $\Lambda_n$ in
$]-\epsilon_2,\epsilon_2[\times \mathcal{U}\times \mathcal{V}$ coincides with the graph of $(M^o,M^i,\Xi)$, and in particular
$
\Big(M^o[0,d_0,\eta_0,r_0],M^i[0,d_0,\eta_0,r_0],\Xi[0,d_0,\eta_0,r_0]\Big)=(\tilde{\mu}^o,\tilde{\mu}^i,\tilde{\xi})\,.
$
\end{proof}

\begin{remark}\label{rem:lin1}
If $F$ is linear, system \eqref{eq:limsys:1an}-\eqref{eq:limsys:1bn} simplifies as
\begin{align}
&-\frac{1}{2}\mu^o(x)+\int_{\partial \Omega^o}\nu_{\Omega^o}(x)\cdot \nabla S_n(x-y)\mu^o(y)\, d\sigma_y\nonumber\\
&\qquad+\nu_{\Omega^o}(x)\cdot \nabla S_n(x)\int_{\partial \omega^i}\mu^i(s)\, d\sigma_s=g^o(x) \qquad \forall x \in \partial \Omega^o\, ,\label{eq:limsyslin:1an}\\
&\frac{1}{2}\mu^i(t)+\int_{\partial \omega^i}\nu_{\omega^i}(t)\cdot \nabla S_n(t-s)\mu^i(s)\, d\sigma_s \nonumber\\
&\qquad=d_0 \int_{\partial \omega^i}S_n(t-s)\mu^i(s)\, d\sigma_s+\xi+g^i(t)r_0 \qquad \forall t \in \partial \omega^i\, .\label{eq:limsyslin:1bn}
\end{align}
Then, by arguing as in the proof of Proposition \ref{prop:Lmbdn}, one verifies that system \eqref{eq:limsyslin:1an}-\eqref{eq:limsyslin:1bn} in the unknown $(\mu^o,\mu^i,\xi)$ admits  a unique solution $(\tilde{\mu}^o,\tilde{\mu}^i,\tilde{\xi})$ in $C^{0,\alpha}(\partial \Omega^o)_0\times C^{0,\alpha}(\partial \omega^i) \times \mathbb{R}$. By integrating \eqref{eq:limsyslin:1an} and \eqref{eq:limsyslin:1bn}, we deduce that
\[
\begin{split}
\frac{1}{\int_{\partial \omega^i}\, d\sigma}\Bigg(\int_{\partial \Omega^o}g^o\, d\sigma-d_0\int_{\partial \omega^i} \int_{\partial \omega^i}S_n(t-s)\tilde{\mu}^i(s)\, d\sigma_s\, d\sigma_t-r_0\int_{\partial \omega^i}g^i\, d\sigma \Bigg)=\tilde{\xi}  \, .
\end{split}
\]
If we further assume that
\[
\Omega^o=\omega^i=\mathbb{B}_n(0,1)\, , \qquad g^o(x)=a \qquad \forall x \in \partial \mathbb{B}_n(0,1)\, , \qquad g^i(t)=b \qquad \forall t \in \partial \mathbb{B}_n(0,1)\, ,
\]
for some constants $a,b \in \mathbb{R}$, then by the well-known identity
\[
\int_{\partial \mathbb{B}_n(0,1)}S_n(t-s)\, d\sigma_t=\frac{1}{2-n} \qquad \forall s \in \partial \mathbb{B}_n(0,1)\, ,
\]
one obtains
\[
\frac{1}{s_n}\bigg(a s_n-d_0 \frac{1}{2-n}a s_n-b s_n r_0\bigg)=\tilde{\xi} \, ,
\]
and thus
\[
\tilde{\xi} =a - br_0 + \frac{a}{n-2} d_0\, .
\]

\end{remark}

Now that we have converted  \eqref{bvpdelta} into a system of integral equations for which we have exhibited a real analytic family of solutions, we  introduce a family of solutions to \eqref{bvpdelta}.

\begin{definition}\label{def:udeltan}
Let the assumptions of Proposition \ref{prop:Lmbdn} hold. Then we set
\[
\begin{split}
&u(\epsilon,x)=\int_{\partial \Omega^o}S_n(x-y)M^o[\epsilon,\epsilon\delta(\epsilon),\eta(\epsilon),\frac{\epsilon^{n-1}}{\rho(\epsilon)}](y)\, d\sigma_y\\
&+ \int_{\partial \omega^i}S_n(x-\epsilon s)M^i[\epsilon,\epsilon\delta(\epsilon),\eta(\epsilon),\frac{\epsilon^{n-1}}{\rho(\epsilon)}](s)\, d\sigma_s+\frac{\Xi[\epsilon,\epsilon\delta(\epsilon),\eta(\epsilon),\frac{\epsilon^{n-1}}{\rho(\epsilon)}]}{\delta(\epsilon)\epsilon^{n-1}}\quad \forall x \in \overline{\Omega(\epsilon)}\, , \forall \epsilon \in ]0,\epsilon_2[\, .
\end{split}
\]
\end{definition}

By Propositions \ref{prop:corr}, \ref{prop:Lmbdn} and by Definition \ref{def:udeltan}, we deduce that for each $\epsilon \in ]0,\epsilon_2[$ the function $u(\epsilon,\cdot) \in C^{1,\alpha}(\overline{\Omega(\epsilon)})$ is a solution to problem \eqref{bvpdelta}. In the following theorems, we  exploit the analyticity result of  Proposition \ref{prop:Lmbdn}   in order to prove representation formulas for $u(\epsilon,\cdot)$  and for its energy integral in terms of real analytic maps. We start with the following Theorem \ref{thm:repn} where we consider the restriction of the solution $u(\epsilon,\cdot)$ to a set which is ``far''  from the hole.

\begin{theorem}\label{thm:repn}
Let the assumptions of Proposition \ref{prop:Lmbdn} hold. Let $\Omega_M$ be a bounded open subset of $\Omega^o$ such that $0 \not \in \overline{\Omega_M}$. Then there exist $\epsilon_M \in ]0,\epsilon_2[$ and a real analytic map $U_M$ from $]-\epsilon_M,\epsilon_M[\times \mathcal{U}$ to  $C^{1,\alpha}(\overline{ \Omega_M})$ such that $\overline{\Omega_M}\subseteq \overline{\Omega(\epsilon)}$ for all $\epsilon \in ]0,\epsilon_M[$,
and that
\begin{equation}\label{eq:repn:a}
\begin{split}
u(\epsilon,x)&= U_M[\epsilon,\epsilon\delta(\epsilon),\eta(\epsilon),\frac{\epsilon^{n-1}}{\rho(\epsilon)}](x)+\frac{\Xi[\epsilon,\epsilon\delta(\epsilon),\eta(\epsilon),\frac{\epsilon^{n-1}}{\rho(\epsilon)}]}{\delta(\epsilon)\epsilon^{n-1}}\qquad\forall x\in \overline{\Omega_M}\,,
\end{split}
\end{equation}
for all $\epsilon \in ]0,\epsilon_M[$. Moreover, if we set
\[
\begin{split}
\tilde{u}_{M}(x)\equiv  &\int_{\partial \Omega^o}S_n(x-y)\tilde{\mu}^o(y)\, d\sigma_y\qquad \forall x \in \overline{\Omega^o}\, ,
\end{split}
\]
we  have that $U_M[0,d_0,\eta_0,r_0]=\tilde{u}_{M |\overline{\Omega_M}}+S_{n|\overline{\Omega_M}}\int_{\partial \Omega^o}g^o\, d\sigma$, and $\tilde{u}_M$ solves the Neumann problem
\begin{equation}
\begin{split}
\label{eq:repn:1}
\left\{
\begin{array}{ll}
\Delta u(x)=0 & \forall x \in \Omega^o\,,\\
\frac{\partial}{\partial \nu_{\Omega^o}}u(x)=g^o(x)-\frac{\partial}{\partial \nu_{\Omega^o}}S_{n}(x)\int_{\partial \Omega^o}g^o\, d\sigma & \forall x \in \partial \Omega^o\, .
\end{array}
\right.\end{split}
\end{equation}
\end{theorem}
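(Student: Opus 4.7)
The plan is to define $U_M$ explicitly in terms of the two single layer potential pieces of the representation from Definition \ref{def:udeltan}, check that the resulting map inherits real analyticity from Proposition \ref{prop:Lmbdn} and from analyticity results for layer potentials, and then evaluate at $\epsilon=0$ using the relation between the solution $(\tilde\mu^o,\tilde\mu^i,\tilde\xi)$ of the limiting system \eqref{eq:limsys:1an}--\eqref{eq:limsys:1bn} and the Neumann problem \eqref{eq:repn:1}.

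First I would fix $\epsilon_M\in]0,\epsilon_2[$ small enough so that $\epsilon\,\overline{\omega^i}\cap\overline{\Omega_M}=\emptyset$ for all $\epsilon\in]{-\epsilon_M},\epsilon_M[$; this is possible because $0\notin\overline{\Omega_M}$ and $\overline{\Omega_M}\subseteq\overline{\Omega^o}$, so in particular $\overline{\Omega_M}\subseteq\overline{\Omega(\epsilon)}$ for all $\epsilon\in]0,\epsilon_M[$. I would then set
\[
U_M[\epsilon,\gamma_1,\gamma_2,\gamma_3](x)\equiv\int_{\partial\Omega^o}\!S_n(x-y)M^o[\epsilon,\gamma_1,\gamma_2,\gamma_3](y)\,d\sigma_y+\int_{\partial\omega^i}\!S_n(x-\epsilon s)M^i[\epsilon,\gamma_1,\gamma_2,\gamma_3](s)\,d\sigma_s
\]
for $x\in\overline{\Omega_M}$. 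Subtracting the constant term in Definition \ref{def:udeltan} then gives \eqref{eq:repn:a} by construction.

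The main technical point is the real analyticity of $U_M$ as a map into $C^{1,\alpha}(\overline{\Omega_M})$. Real analyticity of $(M^o,M^i,\Xi)$ in $(\epsilon,\gamma_1,\gamma_2,\gamma_3)$ is provided by Proposition \ref{prop:Lmbdn}. For the first summand, the mapping $\mu^o\mapsto v^+[\partial\Omega^o,\mu^o]|_{\overline{\Omega_M}}$ is linear and continuous from $C^{0,\alpha}(\partial\Omega^o)$ into $C^{1,\alpha}(\overline{\Omega_M})$ by the classical mapping properties of the single layer potential. For the second summand, since $\mathrm{dist}(\overline{\Omega_M},\epsilon\,\overline{\omega^i})>0$ for $\epsilon\in]{-\epsilon_M},\epsilon_M[$, the kernel $(\epsilon,x,s)\mapsto S_n(x-\epsilon s)$ is real analytic on a neighborhood of $]{-\epsilon_M},\epsilon_M[\times\overline{\Omega_M}\times\partial\omega^i$, and I would invoke the real analyticity theorem for integral operators with real analytic kernel of Lanza de Cristoforis and Musolino \cite{LaMu13} to conclude that the map $(\epsilon,\mu^i)\mapsto\int_{\partial\omega^i}S_n(\cdot-\epsilon s)\mu^i(s)\,d\sigma_s$ is real analytic into $C^{1,\alpha}(\overline{\Omega_M})$. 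Composition with the real analytic map $(\epsilon,\gamma_1,\gamma_2,\gamma_3)\mapsto(\epsilon,M^i[\epsilon,\gamma_1,\gamma_2,\gamma_3])$ yields analyticity of the second summand and therefore of $U_M$.

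Finally, to identify $U_M[0,d_0,\eta_0,r_0]$, I would use that Proposition \ref{prop:Lmbdn} gives $(M^o,M^i,\Xi)[0,d_0,\eta_0,r_0]=(\tilde\mu^o,\tilde\mu^i,\tilde\xi)$. At $\epsilon=0$ the second integral collapses to $S_n(x)\int_{\partial\omega^i}\tilde\mu^i\,d\sigma$, and the integral identity derived after \eqref{eq:exsol:n} gives $\int_{\partial\omega^i}\tilde\mu^i\,d\sigma=\int_{\partial\Omega^o}g^o\,d\sigma$, whence $U_M[0,d_0,\eta_0,r_0]=\tilde u_{M|\overline{\Omega_M}}+S_{n|\overline{\Omega_M}}\int_{\partial\Omega^o}g^o\,d\sigma$. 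To check the Neumann problem \eqref{eq:repn:1} for $\tilde u_M$, I would note that $\tilde u_M=v^+[\partial\Omega^o,\tilde\mu^o]$ is harmonic in $\Omega^o$ and that the jump formula for the normal derivative combined with \eqref{eq:limsys:1an} evaluated at $(\tilde\mu^o,\tilde\mu^i,\tilde\xi)$ yields
\[
\tfrac{\partial}{\partial\nu_{\Omega^o}}\tilde u_M(x)=-\tfrac{1}{2}\tilde\mu^o(x)+W^\ast[\partial\Omega^o,\tilde\mu^o](x)=g^o(x)-\tfrac{\partial}{\partial\nu_{\Omega^o}}S_n(x)\int_{\partial\omega^i}\tilde\mu^i\,d\sigma
\]
on $\partial\Omega^o$, and the right-hand side equals the datum in \eqref{eq:repn:1} after using $\int_{\partial\omega^i}\tilde\mu^i\,d\sigma=\int_{\partial\Omega^o}g^o\,d\sigma$. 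The main obstacle is the joint real analyticity in the second summand, which is handled by the cited analyticity result for integral operators with real analytic kernel once $\overline{\Omega_M}$ is separated from the shrinking hole.
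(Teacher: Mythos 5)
Your proposal is correct and follows essentially the same route as the paper's own proof: the same choice of $\epsilon_M$ separating $\overline{\Omega_M}$ from $\epsilon\overline{\omega^i}$, the same explicit definition of $U_M$ as the sum of the two single layer potentials with densities $M^o$ and $M^i$, real analyticity via Proposition \ref{prop:Lmbdn} together with the analyticity result of \cite{LaMu13}, and the identification at $\epsilon=0$ using $(\tilde\mu^o,\tilde\mu^i,\tilde\xi)$, the identity $\int_{\partial\omega^i}\tilde\mu^i\,d\sigma=\int_{\partial\Omega^o}g^o\,d\sigma$, and the jump formula combined with \eqref{eq:limsys:1an}. You actually spell out the verification of the Neumann problem \eqref{eq:repn:1} in more detail than the paper, which simply cites standard properties of the single layer potential.
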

\begin{proof}
Taking $\epsilon_M \in ]0,\epsilon_2[$ small enough, we can assume that $\overline{\Omega_M}\cap \epsilon\overline{\omega^i}=\emptyset$  for all  $\epsilon \in ]-\epsilon_M,\epsilon_M[$. In view of Definition \ref{def:udeltan}, we find natural to set
\[
\begin{split}
U_M[\epsilon,\gamma_1,\gamma_2, \gamma_3](x)\equiv &\int_{\partial \Omega^o}S_n(x-y)M^o[\epsilon,\gamma_1,\gamma_2, \gamma_3](y)\, d\sigma_y\\
&+ \int_{\partial \omega^i}S_n(x-\epsilon s)M^i[\epsilon,\gamma_1,\gamma_2, \gamma_3](s)\, d\sigma_s\qquad \forall x \in \overline{\Omega_M}\, ,
\end{split}
\]
for all $(\epsilon,\gamma_1,\gamma_2, \gamma_3) \in ]-\epsilon_M,\epsilon_M[\times \mathcal{U}$. By Proposition \ref{prop:Lmbdn} and real analyticity results for integral operators with real analytic kernel (cf.~Lanza de Cristoforis and Musolino \cite{LaMu13}), we verify that $U_M$ is a real analytic map from $]-\epsilon_M,\epsilon_M[\times \mathcal{U}$ to  $C^{1,\alpha}(\overline{ \Omega_M})$ and that equality \eqref{eq:repn:a} holds. By Proposition \ref{prop:Lmbdn}, we also deduce that $U_M[0,d_0,\eta_0,r_0]=\tilde{u}_{M |\overline{\Omega_M}}+S_{n|\overline{\Omega_M}}\int_{\partial \Omega^o}g^o\, d\sigma$ and, by standard properties of the single layer potential (cf.~Dalla Riva, Lanza de Cristoforis, and Musolino  \cite[\S 4.4]{DaLaMu21}), that $\tilde{u}_M$ is a solution of  problem \eqref{eq:repn:1}. The proof is complete.
\end{proof}

\begin{remark}\label{rem:lin2}
By Proposition \ref{prop:Lmbdn}, Remark \ref{rem:lin1}, and Theorem \ref{thm:repn}, if  $F$ is  linear and 
\[
\begin{split}
\tilde{\xi}=\frac{1}{\int_{\partial \omega^i}\, d\sigma}\Bigg(\int_{\partial \Omega^o}g^o\, d\sigma-d_0\int_{\partial \omega^i} \int_{\partial \omega^i}S_n(t-s)\tilde{\mu}^i(s)\, d\sigma_s\, d\sigma_t-r_0\int_{\partial \omega^i}g^i\, d\sigma \Bigg)\neq 0  \, ,
\end{split}
\]
we deduce that the value of the solution at a fixed point $\overline{x} \in \overline{\Omega^o}\setminus \{0\}$ is asymptotic to
\[
\frac{\frac{1}{\int_{\partial \omega^i}\, d\sigma}\Bigg(\int_{\partial \Omega^o}g^o\, d\sigma-d_0\int_{\partial \omega^i} \int_{\partial \omega^i}S_n(t-s)\tilde{\mu}^i(s)\, d\sigma_s\, d\sigma_t-r_0\int_{\partial \omega^i}g^i\, d\sigma \Bigg)}{\delta(\epsilon)\epsilon^{n-1}} \qquad \text{as}\  \epsilon \to 0 \, .
\]
If we further assume that
\[
\Omega^o=\omega^i=\mathbb{B}_n(0,1)\, , \qquad g^o(x)=a \qquad \forall x \in \partial \mathbb{B}_n(0,1)\, , \qquad g^i(t)=b \qquad \forall t \in \partial \mathbb{B}_n(0,1)\, ,
\]
for some constants $a,b \in \mathbb{R}$, then if
\[
a - br_0 + \frac{a}{n-2} d_0 \neq 0\, ,
\]
we deduce that the value of the solution at a fixed point $\overline{x} \in \overline{\mathbb{B}_n(0,1)}\setminus \{0\}$ is asymptotic to
\[
\frac{a - br_0 + \frac{a}{n-2} d_0}{\delta(\epsilon)\epsilon^{n-1}} \qquad \text{as}\  \epsilon \to 0 \, .
\]
Thus we recover the result of Section \ref{model} on the toy problem.
\end{remark}

We now consider in  Theorem \ref{thm:repnmicro} below the behavior of the rescaled solution  $u(\epsilon,\epsilon t)$.

\begin{theorem}\label{thm:repnmicro}
Let the assumptions of Proposition \ref{prop:Lmbdn} hold. Let $\Omega_m$ be a bounded open subset of $\mathbb{R}^n\setminus \overline{\omega^i}$. Then there exist $\epsilon_m \in ]0,\epsilon_2[$ and a real analytic map $U_m$ from $]-\epsilon_m,\epsilon_m[\times \mathcal{U}$ to  $C^{1,\alpha}(\overline{ \Omega_m})$ such that $\epsilon\overline{\Omega_m}\subseteq \overline{\Omega(\epsilon)}$ for all $\epsilon \in ]0,\epsilon_m[$, and that
\[
\begin{split}
u(\epsilon,\epsilon t)&= \frac{1}{\epsilon^{n-2}}U_m[\epsilon,\epsilon\delta(\epsilon),\eta(\epsilon),\frac{\epsilon^{n-1}}{\rho(\epsilon)}](t)+\frac{\Xi[\epsilon,\epsilon\delta(\epsilon),\eta(\epsilon),\frac{\epsilon^{n-1}}{\rho(\epsilon)}]}{\delta(\epsilon)\epsilon^{n-1}}\qquad\forall x\in \overline{\Omega_m}\,,
\end{split}
\]
for all $\epsilon \in ]0,\epsilon_m[$. Moreover, if we set
\[
\begin{split}
\tilde{u}_{m}(t)\equiv  &\int_{\partial \omega^i}S_n(t-s)\tilde{\mu}^i(s)\, d\sigma_s\qquad \forall t \in \mathbb{R}^n \setminus \omega^i\, ,
\end{split}
\]
we  have that $U_m[0,d_0,\eta_0,r_0]=\tilde{u}_{m |\overline{\Omega_m}}$ and $\tilde{u}_m$ solves the (nonlinear) Robin problem
\begin{equation}
\begin{split}
\label{eq:repnmicro:1}
\left\{
\begin{array}{ll}
\Delta u(t)=0 & \forall t \in\mathbb{R}^n \setminus \overline{\omega^i}\,,\\
\frac{\partial}{\partial \nu_{\omega^i}}u(t)=\tilde{F} \Bigg(d_0 u(t)+\tilde{\xi},\eta_0\Bigg)+g^i(t)r_0& \forall t \in \partial \omega^i\, ,\\
\lim_{t \to \infty}u(t)=0\, .  &
\end{array}
\right.\end{split}
\end{equation}
\end{theorem}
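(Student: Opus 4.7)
The argument will closely parallel that of Theorem \ref{thm:repn}, with two new ingredients: the rescaling $x=\epsilon t$ and the homogeneity of the fundamental solution. First I would choose $\epsilon_m\in]0,\epsilon_2[$ small enough to guarantee that $\epsilon\overline{\Omega_m}$ is a compact subset of $\Omega(\epsilon)$ for all $\epsilon\in]0,\epsilon_m[$. This is possible since $\overline{\Omega_m}\subseteq\mathbb{R}^n\setminus\overline{\omega^i}$ by assumption (so $\epsilon\overline{\Omega_m}\cap\epsilon\overline{\omega^i}=\emptyset$), and since $\overline{\Omega_m}$ is bounded and $0\in\Omega^o$ (so $\epsilon\overline{\Omega_m}$ shrinks inside $\Omega^o$ as $\epsilon\to 0$).

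Starting from the formula of Definition \ref{def:udeltan}, evaluated at $x=\epsilon t$, and using the $(2-n)$-homogeneity $S_n(\epsilon t-\epsilon s)=\epsilon^{-(n-2)}S_n(t-s)$ to pull the small parameter out of the inner single layer, I would set
\[
U_m[\epsilon,\gamma_1,\gamma_2,\gamma_3](t)\equiv\epsilon^{n-2}\!\int_{\partial\Omega^o}\!S_n(\epsilon t-y)M^o[\epsilon,\gamma_1,\gamma_2,\gamma_3](y)\,d\sigma_y+\int_{\partial\omega^i}\!S_n(t-s)M^i[\epsilon,\gamma_1,\gamma_2,\gamma_3](s)\,d\sigma_s
\]
for $(\epsilon,\gamma_1,\gamma_2,\gamma_3)\in]-\epsilon_m,\epsilon_m[\times\mathcal{U}$ and $t\in\overline{\Omega_m}$. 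By construction, this definition yields the representation formula of the statement. Real analyticity of $U_m$ as a map into $C^{1,\alpha}(\overline{\Omega_m})$ is then a consequence of Proposition \ref{prop:Lmbdn} together with the standard real analyticity results for integral operators with real analytic kernel of Lanza de Cristoforis and Musolino \cite{LaMu13}: the first integrand involves $S_n(\epsilon t-y)$ with $(t,y)\in\overline{\Omega_m}\times\partial\Omega^o$ and $\epsilon t$ bounded away from $\partial\Omega^o$ (for $\epsilon_m$ small), hence $(\epsilon,t,y)\mapsto S_n(\epsilon t-y)$ is real analytic; the second integrand has the $\epsilon$-independent kernel $S_n(t-s)$ on $\overline{\Omega_m}\times\partial\omega^i$ which is smooth since $\overline{\Omega_m}\cap\overline{\omega^i}=\emptyset$.

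Setting $\epsilon=0$ and $(\gamma_1,\gamma_2,\gamma_3)=(d_0,\eta_0,r_0)$ makes the prefactor $\epsilon^{n-2}$ kill the first summand (since $n\ge 3$), while $M^i$ reduces to $\tilde{\mu}^i$ by Proposition \ref{prop:Lmbdn}; hence $U_m[0,d_0,\eta_0,r_0]=\tilde{u}_{m|\overline{\Omega_m}}$, as required. To verify that $\tilde{u}_m$ solves \eqref{eq:repnmicro:1}, I would observe that $\tilde{u}_m=v^{-}[\partial\omega^i,\tilde{\mu}^i]$ in $\mathbb{R}^n\setminus\overline{\omega^i}$; standard properties of the single layer potential (see \cite[\S 4.4]{DaLaMu21}) yield harmonicity in $\mathbb{R}^n\setminus\overline{\omega^i}$ and, since $n\ge 3$, the decay $\tilde{u}_m(t)=O(|t|^{2-n})$ as $|t|\to\infty$, which gives the condition at infinity. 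For the Robin condition on $\partial\omega^i$, I would apply the jump formula
\[
\frac{\partial}{\partial\nu_{\omega^i}}v^{-}[\partial\omega^i,\tilde{\mu}^i](t)=\tfrac{1}{2}\tilde{\mu}^i(t)+W^{\ast}[\partial\omega^i,\tilde{\mu}^i](t)\qquad\forall t\in\partial\omega^i,
\]
and then compare with the limit equation \eqref{eq:limsys:1bn}, in which the right-hand side is precisely $\tilde{F}(d_0\tilde{u}_m(t)+\tilde{\xi},\eta_0)+g^i(t)r_0$.

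The bulk of the proof is bookkeeping; the only delicate point I anticipate is the interplay between the explicit $\epsilon^{n-2}$ prefactor in the definition of $U_m$ and the application of the analyticity lemma of \cite{LaMu13} for the first integral operator, which has to be performed uniformly in $(\epsilon,t)$ on a neighborhood of $(0,\overline{\Omega_m})$. This is resolved by the choice of $\epsilon_m$ that keeps $\epsilon t$ uniformly bounded away from $\partial\Omega^o$ and by the fact that multiplication by $\epsilon^{n-2}$ is itself a real analytic operation on the target $C^{1,\alpha}(\overline{\Omega_m})$.
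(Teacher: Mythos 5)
Your proposal is correct and follows essentially the same route as the paper's proof: the same choice of $\epsilon_m$, the same definition of $U_m$ via the $(2-n)$-homogeneity of $S_n$, the same appeal to Proposition \ref{prop:Lmbdn} and the analyticity results of \cite{LaMu13}, and the same identification of $U_m[0,d_0,\eta_0,r_0]$ with $\tilde{u}_{m|\overline{\Omega_m}}$. Your explicit verification of the Robin condition via the jump formula and the limit system \eqref{eq:limsys:1bn} merely spells out what the paper compresses into a citation of \cite[\S 4.4]{DaLaMu21}.
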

\begin{proof}
Taking $\epsilon_m \in ]0,\epsilon_2[$ small enough, we can assume that $\epsilon\overline{\Omega_m}\subseteq \overline{\Omega^o}$  for all  $\epsilon \in ]-\epsilon_m,\epsilon_m[$.
By Definition \ref{def:udeltan}, we note that if $\epsilon \in ]0,\epsilon_m[$ then
\begin{align*}
u(\epsilon,\epsilon t)=&\int_{\partial \Omega^o}S_n(\epsilon t-y)M^o[\epsilon,\epsilon\delta(\epsilon),\eta(\epsilon),\frac{\epsilon^{n-1}}{\rho(\epsilon)}](y)\, d\sigma_y\\
&+ \int_{\partial \omega^i}S_n(\epsilon t-\epsilon s)M^i[\epsilon,\epsilon\delta(\epsilon),\eta(\epsilon),\frac{\epsilon^{n-1}}{\rho(\epsilon)}](s)\, d\sigma_s+\frac{\Xi[\epsilon,\epsilon\delta(\epsilon),\eta(\epsilon),\frac{\epsilon^{n-1}}{\rho(\epsilon)}]}{\delta(\epsilon)\epsilon^{n-1}}\\
=&\frac{1}{\epsilon^{n-2}} \Bigg(\epsilon^{n-2}\int_{\partial \Omega^o}S_n(\epsilon t-y)M^o[\epsilon,\epsilon\delta(\epsilon),\eta(\epsilon),\frac{\epsilon^{n-1}}{\rho(\epsilon)}](y)\, d\sigma_y\\
&+ \int_{\partial \omega^i}S_n( t- s)M^i[\epsilon,\epsilon\delta(\epsilon),\eta(\epsilon),\frac{\epsilon^{n-1}}{\rho(\epsilon)}](s)\, d\sigma_s\Bigg)+\frac{\Xi[\epsilon,\epsilon\delta(\epsilon),\eta(\epsilon),\frac{\epsilon^{n-1}}{\rho(\epsilon)}]}{\delta(\epsilon)\epsilon^{n-1}}
\qquad \forall t \in \overline{\Omega_m}\, .
\end{align*}
Accordingly, we set
\[
\begin{split}
U_m[\epsilon,\gamma_1,\gamma_2, \gamma_3](t)\equiv &\epsilon^{n-2}\int_{\partial \Omega^o}S_n(\epsilon t-y)M^o[\epsilon,\gamma_1,\gamma_2, \gamma_3](y)\, d\sigma_y\\
&+ \int_{\partial \omega^i}S_n(t-s)M^i[\epsilon,\gamma_1,\gamma_2, \gamma_3](s)\, d\sigma_s\qquad \forall t \in \overline{\Omega_m}\, ,
\end{split}
\]
for all $(\epsilon,\gamma_1,\gamma_2, \gamma_3) \in ]-\epsilon_m,\epsilon_m[\times \mathcal{U}$. By Proposition \ref{prop:Lmbdn} and real analyticity results for integral operators with real analytic kernel (cf.~Lanza de Cristoforis and Musolino \cite{LaMu13}), we verify that $U_m$ is a real analytic map from $]-\epsilon_m,\epsilon_m[\times \mathcal{U}$ to  $C^{1,\alpha}(\overline{ \Omega_m})$ and that equality \eqref{eq:repn:a} holds. By Proposition \ref{prop:Lmbdn}, we also deduce that $U_m[0,d_0,\eta_0,r_0]=\tilde{u}_{m|\overline{\Omega_m}}$ and,  by standard properties of the single layer potential (cf.~Dalla Riva, Lanza de Cristoforis, and Musolino  \cite[\S 4.4]{DaLaMu21}), that $\tilde{u}_m$ is a solution of the (nonlinear) Robin problem \eqref{eq:repnmicro:1}. The proof is complete.
\end{proof}

Finally, we consider  the energy integral $\int_{\Omega(\epsilon)}|\nabla u(\epsilon,x)|^2\, dx$ when $\epsilon$ is close to $0$.

\begin{theorem}\label{thm:enrepn}
Let the assumptions of Proposition \ref{prop:Lmbdn} hold. Let $\tilde{u}_{m}$ be as in Theorem \ref{thm:repnmicro}. Then there exist $\epsilon_e \in ]0,\epsilon_2[$ and  a real analytic map $E$ from $]-\epsilon_e,\epsilon_e[\times \mathcal{U}$ to $\mathbb{R}$ such that
\begin{equation}\label{eq:repn:3}
\begin{split}
\int_{\Omega(\epsilon)}|\nabla u(\epsilon,x)|^2\, dx &= \frac{1}{\epsilon^{n-2}}E[\epsilon,\epsilon\delta(\epsilon),\eta(\epsilon),\frac{\epsilon^{n-1}}{\rho(\epsilon)}]\,,
\end{split}
\end{equation}
for all $\epsilon \in ]0,\epsilon_e[$. Moreover,
\begin{equation}\label{eq:repn:4}
E[0,d_0,\eta_0,r_0]= \int_{\mathbb{R}^n\setminus \omega^i}|\nabla \tilde{u}_m(t)|^2\, dt\, .
\end{equation}
\end{theorem}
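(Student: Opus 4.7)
The plan is to express the volume energy as boundary integrals via Green's first identity and then to harvest real analyticity from Proposition~\ref{prop:Lmbdn}. Since $u(\epsilon,\cdot)$ is harmonic in $\Omega(\epsilon)$ and the outward unit normal of $\Omega(\epsilon)$ on the hole coincides with $-\nu_{\epsilon\omega^i}$, the boundary conditions in \eqref{bvpdelta} give
\begin{align*}
\int_{\Omega(\epsilon)}|\nabla u(\epsilon,x)|^2\,dx
= \int_{\partial\Omega^o} u(\epsilon,x)\,g^o(x)\,d\sigma_x
- \int_{\epsilon\partial\omega^i} u(\epsilon,x)\Big(\delta(\epsilon)F_\epsilon(u(\epsilon,x)) + \tfrac{g^i(x/\epsilon)}{\rho(\epsilon)}\Big)d\sigma_x.
\end{align*}
The divergence theorem applied to $\nabla u(\epsilon,\cdot)$ gives $\int_{\partial\Omega^o}g^o\,d\sigma=\int_{\epsilon\partial\omega^i}\partial_{\nu_{\epsilon\omega^i}}u\,d\sigma$, so we may freely subtract any constant from $u$ in both boundary integrals. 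Subtracting $C(\epsilon)\equiv \Xi[\epsilon,\epsilon\delta(\epsilon),\eta(\epsilon),\epsilon^{n-1}/\rho(\epsilon)]/(\delta(\epsilon)\epsilon^{n-1})$ leaves only the purely single-layer part $\hat u(\epsilon,\cdot)\equiv u(\epsilon,\cdot)-C(\epsilon)$ of Definition~\ref{def:udeltan}.

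On $\partial\Omega^o$ the function $\hat u$ is jointly real analytic in $(\epsilon,\gamma_1,\gamma_2,\gamma_3)$ near $(0,d_0,\eta_0,r_0)$ by Proposition~\ref{prop:Lmbdn} and the analyticity of integral operators with real-analytic kernels (cf.~Lanza de Cristoforis and Musolino~\cite{LaMu13}). On $\epsilon\partial\omega^i$ I change variables $x=\epsilon t$ and, exactly as in the proof of Theorem~\ref{thm:repnmicro}, use $\hat u(\epsilon,\epsilon t)=\epsilon^{-(n-2)}U_m[\epsilon,\gamma_1,\gamma_2,\gamma_3](t)$; the structural assumption \eqref{eq:addass:1} then rewrites $\epsilon^{n-1}\delta(\epsilon)F_\epsilon(u(\epsilon,\epsilon t))$ as $\tilde F$ applied to the analytic argument already appearing in \eqref{eq:Lmbd:1bn}. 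A direct bookkeeping produces
\begin{align*}
\int_{\Omega(\epsilon)}|\nabla u|^2\,dx
= \frac{1}{\epsilon^{n-2}}\bigg\{\epsilon^{n-2}\int_{\partial\Omega^o}\hat u\,g^o\,d\sigma
- \int_{\partial\omega^i}U_m\Big(\tilde F(\,\cdot\,,\gamma_2) + \gamma_3\,g^i\Big)d\sigma_t\bigg\},
\end{align*}
with the expression in braces real analytic on $]-\epsilon_e,\epsilon_e[\times\mathcal{U}$ for some $\epsilon_e\in\,]0,\epsilon_2[$. This defines $E$ and establishes \eqref{eq:repn:3}.

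At $(0,d_0,\eta_0,r_0)$ the outer integral vanishes thanks to its explicit $\epsilon^{n-2}$ prefactor, while $U_m[0,d_0,\eta_0,r_0]=\tilde u_{m|\overline{\Omega_m}}$ together with the Robin condition in \eqref{eq:repnmicro:1} converts the remaining contribution into $-\int_{\partial\omega^i}\tilde u_m\,\partial_{\nu_{\omega^i}}\tilde u_m\,d\sigma_t$. The main delicate step is to identify this with $\int_{\mathbb{R}^n\setminus\omega^i}|\nabla\tilde u_m|^2\,dt$ via Green's first identity on the exterior domain $\mathbb{R}^n\setminus\overline{\omega^i}$, whose outward normal on $\partial\omega^i$ equals $-\nu_{\omega^i}$. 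This is legitimate for $n\geq 3$ because $\tilde u_m$ is a single layer potential and hence $\tilde u_m(t)=O(|t|^{2-n})$ and $|\nabla\tilde u_m(t)|=O(|t|^{1-n})$ at infinity, which both guarantees finiteness of the energy and kills the boundary terms on spheres of radius $R\to\infty$. This yields \eqref{eq:repn:4}.
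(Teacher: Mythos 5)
Your proposal is correct and follows essentially the same route as the paper's proof: Green's identity reduces the energy to boundary integrals, the outer and rescaled inner contributions are expressed through the analytic families of Proposition~\ref{prop:Lmbdn} (via $U_M$ and $U_m$) to extract the $\epsilon^{-(n-2)}$ factor, and the limit value is identified by the exterior Divergence Theorem using the $O(|t|^{2-n})$ decay of the single layer potential. The only (harmless) difference is that you substitute the boundary data $g^o$ and the nonlinear Robin expression into the integrands and explicitly justify subtracting the constant $\Xi/(\delta(\epsilon)\epsilon^{n-1})$, whereas the paper keeps the integrands in the form $U\,\nu\cdot\nabla U$; both yield a valid real analytic $E$.
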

\begin{proof}
Let $\epsilon \in ]0,\epsilon_1[$. By the Divergence Theorem, we have that
\begin{align*}
\int_{\Omega(\epsilon)}&|\nabla u(\epsilon,x)|^2\, dx =\int_{\partial \Omega^o}u(\epsilon,x)\frac{\partial}{\partial \nu_{\Omega^o}}u(\epsilon,x)\, d\sigma_x-\int_{\partial \epsilon \omega^i}u(\epsilon,x)\frac{\partial}{\partial \nu_{\epsilon \omega^i}}u(\epsilon,x)\, d\sigma_x\\
&=\int_{\partial \Omega^o}u(\epsilon,x)\frac{\partial}{\partial \nu_{\Omega^o}}u(\epsilon,x)\, d\sigma_x-\epsilon^{n-1}\int_{\partial  \omega^i}u(\epsilon,\epsilon t)\nu_{\omega^i}(t)\cdot  \nabla u(\epsilon,\epsilon t)\, d\sigma_t\, .
\end{align*}
Then let $U_M$ and $\epsilon_M$ as in Theorem \ref{thm:repn}, with  $\Omega_M \equiv \Omega^o \setminus \overline{\mathbb{B}_n(0,r_M)}$, for some $r_M>0$ such that  $\overline{\mathbb{B}_n(0,r_M)} \subseteq \Omega^o$. Then one verifies that if $\epsilon \in ]0,\epsilon_M[$
\[
\begin{split}
&\int_{\partial \Omega^o}u(\epsilon,x)\frac{\partial}{\partial \nu_{\Omega^o}}u(\epsilon,x)\, d\sigma_x\\
&=\int_{\partial \Omega^o} U_M[\epsilon,\epsilon\delta(\epsilon),\eta(\epsilon),\frac{\epsilon^{n-1}}{\rho(\epsilon)}](x) \nu_{\Omega^o}(x)\cdot  \nabla U_M[\epsilon,\epsilon\delta(\epsilon),\eta(\epsilon),\frac{\epsilon^{n-1}}{\rho(\epsilon)}](x)\, d\sigma_x\, .
\end{split}
\]
Then let $U_m$ and $\epsilon_m$ as in Theorem \ref{thm:repnmicro}, with $\Omega_m \equiv {\mathbb{B}_n(0,r_m)}\setminus \overline{\omega^i}$, for some $r_m>0$ such that  ${\mathbb{B}_n(0,r_m)} \supseteq \overline{\omega^i}$. Then one verifies that if $\epsilon \in ]0,\epsilon_m[$
\begin{align*}
&\epsilon^{n-1}\int_{\partial  \omega^i}u(\epsilon,\epsilon t)\nu_{\omega^i}(t)\cdot  \nabla u(\epsilon,\epsilon t)\, d\sigma_t\\
&=\frac{1}{\epsilon^{n-2}}\int_{\partial \omega^i} U_m[\epsilon,\epsilon\delta(\epsilon),\eta(\epsilon),\frac{\epsilon^{n-1}}{\rho(\epsilon)}](t) \nu_{\omega^i}(t)\cdot  \nabla U_m[\epsilon,\epsilon\delta(\epsilon),\eta(\epsilon),\frac{\epsilon^{n-1}}{\rho(\epsilon)}](t)\, d\sigma_t\, .
\end{align*}
As a consequence, we set $\epsilon_e \equiv \min \{\epsilon_m,\epsilon_M\}$ and
\[
\begin{split}
E[\epsilon,\gamma_1,\gamma_2,\gamma_3]\equiv \epsilon^{n-2}&\int_{\partial \Omega^o} U_M[\epsilon,\gamma_1,\gamma_2,\gamma_3](x) \nu_{\Omega^o}(x)\cdot  \nabla U_M[\epsilon,\gamma_1,\gamma_2,\gamma_3](x)\, d\sigma_x\\
&-\int_{\partial \omega^i} U_m[\epsilon,\gamma_1,\gamma_2,\gamma_3](t) \nu_{\omega^i}(t)\cdot  \nabla U_m[\epsilon,\gamma_1,\gamma_2,\gamma_3](t)\, d\sigma_t
\end{split}
\]
for all $(\epsilon,\gamma_1,\gamma_2,\gamma_3)\in ]-\epsilon_e,\epsilon_e[\times \mathcal{U}$. We verify that $E$ is a real analytic map from $]-\epsilon_e,\epsilon_e[\times \mathcal{U}$ to $\mathbb{R}$ and that equality \eqref{eq:repn:3} holds. Moreover, by the behavior at infinity of $\tilde{u}_m$ and the Divergence Theorem on exterior domains (cf.~Dalla Riva, Lanza de Cristoforis, and Musolino  \cite[\S 3.4 and \S 4.2]{DaLaMu21}), we verify that
\[
\begin{split}
E[0,d_0,\eta_0,r_0]&= -\int_{\partial \omega^i} \tilde{u}_m(t) \nu_{\omega^i}(t)\cdot  \nabla \tilde{u}_m(t)\, d\sigma_t =\int_{\mathbb{R}^n\setminus \omega^i}|\nabla \tilde{u}_m(t)|^2\, dt\, ,
\end{split}
\]
and accordingly equality \eqref{eq:repn:4} holds.
\end{proof}

\section{Conclusion}

We have studied  the asymptotic behavior of the solutions of a boundary value problem for the Laplace equation in a perforated domain of $\mathbb{R}^n$, $n \geq 3$, with a (nonlinear) Robin boundary condition which may degenerate into a Neumann condition on the boundary of a small hole of size $\epsilon$. Under suitable assumptions, for $\epsilon$ close to $0$ the value of the solution at a fixed point far from the origin behaves as $1/(\delta(\epsilon)\epsilon^{n-1})$, where $\delta(\epsilon)$ is a  coefficient in front a nonlinear function of the trace of the solution in the Robin boundary condition.   We have also investigated the behavior of the energy integral of the solutions as $\epsilon$ tends to $0$: the energy integral behaves as $1/\epsilon^{n-2}$ multiplied by the energy integral of a solution of an exterior nonlinear Robin problem. In particular, if $\delta(\epsilon)=\epsilon^{r}$, then in order to satisfy assumption \eqref{eq:Lmbd:limass} we need to have $r\geq -1$,  and we have that the value of the solution at a fixed point  behaves as $1/\epsilon^{n-1+r}$, whereas the energy integral behaves as $1/\epsilon^{n-2}$ (and such behavior is not affected by  the specific power $\delta(\epsilon)=\epsilon^r$). As we have seen, our study is confined to the case of dimension $n \geq 3$. We plan to investigate the two-dimensional case (which requires a different analysis due to the logarithmic behavior of the fundamental solution) in a forthcoming paper. Moreover, together with the study of the planar case, we wish to include numerical examples.

\vskip6pt

\enlargethispage{20pt}

%

\section*{Acknowledgment}

The authors acknowledge the support from EU through the H2020-MSCA-RISE-2020 project EffectFact,
Grant agreement ID: 101008140.  P.M.~acknowledges also the support of  the grant ``Challenges in Asymptotic and Shape Analysis - CASA''  of the Ca' Foscari University of Venice. G.M. acknowledges also Ser Cymru Future Generation Industrial Fellowship number AU224 -- 80761.

The authors thank the referees for several valuable comments. Part of the work was done while P.M.~was visiting Martin Dutko~at Rockfield Software Limited. P.M.~wishes to thank Martin Dutko~for useful discussions and the kind hospitality. P.M.~is a  member of the Gruppo Nazionale per l'Analisi Matematica, la Probabilit\`a e le loro Applicazioni (GNAMPA) of the Istituto Nazionale di Alta Matematica (INdAM). G.M.~thanks the Royal Society for the Wolfson Research Merit Award.



\end{document}